\begin{document}

\title{
Path-Kernel Method for Differentiating Unstable Diffusions
}

\begin{abstract}
We derive and prove the path-kernel formula for the linear response (parameter-derivative of averaged statistics) of SDEs.
The parameter may affect the drift coefficient, the diffusion coefficient, and the initial condition. The formula tempers the unstableness by gradually moving the derivative from path-perturbation to kernel-differentiation, without assuming hyperbolicity. We prove it by direct comparison of bundles of paths across different parameter values.
We also derive a pathwise Monte Carlo algorithm for estimating linear responses and demonstrate it on the 40-dimensional noisy Lorenz--96 system.
Our result provides a new computational tool for optimization, and has already led to a follow-up application to data assimilation.

\smallskip
\noindent \textbf{AMS subject classification numbers.}
60H07, 
65D25, 
37M25, 
65C05, 

\smallskip
\noindent \textbf{Keywords.}
Linear response,
chaos,
SDEs,
likelihood-ratio,
path-perturbation.

\end{abstract}

\maketitle

\section{Introduction}
\label{s:intro}

\subsection{Main results}
\label{s:mainResults}

This paper derives the path-kernel formula for the linear response of random dynamical systems in \Cref{l:Dt}, providing a fundamental tool for the numerical optimization of diffusion processes.
We prove the finite-time formula rigorously for both discrete-time systems and continuous-time SDEs, and then formally pass to the infinite-time limit.
Previous linear response tools had only limited applicability, since they typically require purely stable or purely unstable dynamics, very low-dimensional systems, strong hyperbolicity, or fixed diffusion coefficients; none of these assumptions holds in typical applications.
Our new tool breaks these limitations and therefore makes linear response methods available for a broader class of applied problems.
For example, it enables solving data assimilation problems in much greater generality (high-dimensional, chaotic, non-synchronized systems with unknown parameters and partial and noisy observations), as shown in a follow-up work \cite{apk}.

In the main theorem below, $B_t$ denotes a standard Brownian motion.
To keep the notation from becoming too heavy, we omit the subscript \(t\) when no confusion arises.
Let $\gamma$ be the parameter controlling the dynamics and the initial condition, and hence the distribution of the process $\{X_t^\gamma\}_{t\ge0}$; by default, $\gamma=0$, so we write $X:=X^{\gamma=0}$.
Let $F^\gamma(\cdot)$ be a vector field, $\sigma^\gamma(\cdot)$ be a scalar function, and the spatial derivative $\nabla_v F:=\pp F x v$.
We denote the parameter-induced perturbation by $\delta(\cdot):=\partial(\cdot)/\partial\gamma|_{\gamma=0}$.
Let $\Phi$ be a fixed observable function.
The details of notations and assumptions are in \Cref{s:notation}.

\begin{restatable}[path-kernel formula for differentiating SDEs]{theorem}{goldbach}
\label{t:main}
Fix any \(x_0\), \(v_0\), and any scalar adapted process \(\alpha_t\) (called a `schedule') independent of \(\gamma\).
Consider the Ito SDE,
\[
dX^\gamma_t
= F^\gamma(X^\gamma_t) dt 
+ \sigma^\gamma(X^\gamma_t)dB,
\quad
X_0^\gamma = x_0^\gamma :=  x_0 + \gamma v_0.
\]
Note that $F^\gamma(\cdot)$ and $\sigma^\gamma(\cdot)$ depend on the parameter $\gamma$.
Let $v_t$ be the solution of the damped path-perturbation equation from $v_0$,
\[
dv
= - \alpha_t v dt 
+ \left(\nabla_v F(X) + \delta F^\gamma (X) \right) dt
+ \left(\nabla_v \sigma(X) + \delta \sigma^\gamma (X)\right)dB.
\]
Under \Cref{ass}, the linear response has the expression
\[
\delta \E{\Phi(X^\gamma_T)}
=
\E{ \nabla\Phi(X_T) \cdot v_T
+\Phi(X_T) \int_{t=0}^{T} \frac{\alpha_t v_t }{\sigma(X_t)}\cdot dB_t}.
\]
\end{restatable}

The main difficulties of this problem are as follows.
First, the system may be unstable, so the path perturbation can grow very rapidly, which causes serious convergence difficulties when \(T\) is large.
Second, when the diffusion coefficient is parameterized, the governing equation for \(v\) contains the inhomogeneous stochastic term \(\delta \sigma\, dB\), so we cannot apply the likelihood-ratio method, the Cameron--Martin--Girsanov transformation (shortened as the Girsanov transformation), or the Bismut--Elworthy--Li formula (shortened as the Bismut formula).
In our new formula, we typically choose \(\alpha_t \ge 0\), so that the term \(\alpha_t v_t\,dt\) damps the unstable growth of the path perturbation \(v_t\).
This term represents the portion of the path perturbation that is shifted to the probability kernel.
\Cref{t:ergodic} presents the linear response formula along a single infinite-time orbit for the stationary measure, or equivalently for the long-time averaged statistic.

\subsection{Literature review}
\label{s:review}

The averaged statistic of a dynamical system is of central interest in applied sciences.
The average may be taken either with respect to randomness or over long time.
The corresponding long-time limit measure is called the physical (or SRB) measure \cite{young2002srb,srbmap,srbflow}, or the stationary measure when the system is stochastic \cite{DurretText}.
We are interested in the linear response of discrete-time random dynamical systems and stochastic differential equations (SDEs), that is, the derivative of an averaged observable with respect to system parameters.
These parameters may control the initial condition and the drift coefficients, and --- more challengingly --- the diffusion coefficients.
Linear response is a fundamental tool for many applications; in particular, it is essential for the optimization of (unstable) diffusion models, which are widely used in applied sciences such as data assimilation.

There are three basic methods for expressing and computing linear response: the path-perturbation method (shortened to the path method), the divergence method, and the kernel-differentiation method (shortened to the kernel method).

The first basic method is the path-perturbation method, which computes the linear response by averaging the path perturbation over many sample paths. It is also known in various contexts as the ensemble method, stochastic gradient method, backpropagation, adjoint method, or a Koopman-operator method \cite{eyink2004ruelle,lucarini_linear_response_climate,Caflisch2021}.
Proofs of this formula for the physical measure of deterministic hyperbolic systems were given, for example, in \cite{Dolgopyat2004,Ruelle_diff_maps,Jiang2012}.
This framework also includes backpropagation, which is the basic algorithm underlying much of machine learning.
However, when the system is chaotic, so that the pathwise perturbation grows exponentially fast in time, the method becomes prohibitively expensive, since the variance of the estimator also grows exponentially.
A common workaround is to artificially reduce the size of the path perturbation. One may minimize the path perturbation, leading to shadowing methods \cite{Ni_nilsas,Ni_NILSS_JCP}.
One may also explicitly introduce a damping (or clipping) term \cite{Talnikar_thesis,clip_gradients2}, or damp only selected frequencies when the unstable directions are concentrated in frequency \cite{FP25}.
However, it was previously difficult to compute the additional error introduced by such damping; this paper provides the answer for random dynamical systems.

The second basic linear response method is the divergence method, which is also known as the transfer-operator method; it is also sometimes presented as a consequence of the Fokker--Planck equation or in the language of flow matching.
The main idea is that the perturbation of the measure transfer operator takes the form of a divergence.
Traditionally, for deterministic systems, the divergence method is functional in nature, and the formula is not defined pointwise when the system has contracting directions \cite{Gouezel2008,Baladi2017}.
As a result, no sampling algorithm was available, so the divergence method is very expensive in high dimensions \cite{Galatolo2014,Wormell2019a}.
Here, “high-dimensional” should be understood in the dynamical-systems sense of many degrees of freedom, which includes essentially all applied systems.
For SDEs, differentiating the Fokker--Planck equation leads essentially to the divergence formula, but the situation is even worse: the additional second-order term cannot be represented by a single path, so sampling becomes even harder \cite{Lucarini2026}.

For deterministic hyperbolic systems, the fast response formula unifies the path-perturbation and the divergence method, giving a pointwise expression for linear response without distributions or exponentially growing terms.
It consists of two parts: the adjoint shadowing lemma \cite{Ni_asl} and the equivariant divergence formula \cite{TrsfOprt}.
The continuous-time version of the fast response formula is given in \cite{vdivF}.
Because it admits sampling, and hence Monte Carlo-type computation, the method can be used effectively in high dimensions \cite{fr,Ni_nilsas,far}.
It is also convenient for estimating the norm of the linear-response operator.
In particular, it has been used to solve the optimal response problem in the hyperbolic setting \cite{GN25}.

The third basic linear response method is the kernel-differentiation method, which applies only to random systems.
For SDEs, it follows directly from the Girsanov transformation \cite{CM44}, which underlies a major part of Malliavin calculus \cite{MalliavinBook}.
It is also known as the likelihood-ratio method or the Monte Carlo gradient method \cite{Rubinstein1989,Reiman1989,Glynn1990}.
Proofs of the kernel-differentiation method for stationary measures were given in \cite{HaMa10,bahsoun20,GG19}.
For random dynamics, this method is more robust than the previous two, since it does not involve Jacobian matrices \cite{sedro23}.

In numerical computation, the kernel-differentiation method naturally allows Monte Carlo computation and is therefore efficient in high dimensions.
In \cite{Ni_kd}, we gave an ergodic version of the kernel-differentiation method for stationary measures, meaning that it can be implemented along a single orbit.
We also showed that the method remains valid, and can be significantly faster, when the noise and perturbations are along a foliation of submanifolds \cite{Ni_kd}.
A major shortcoming of the kernel-differentiation method is that its variance is inversely proportional to the scale of the noise, so the computational cost becomes high when the noise is small.
A possible remedy is the triad program proposed in \cite{Ni_kd}, which requires progress in all three methods.

One of the most important open problems for the kernel-differentiation method for SDEs is that previous works do not allow the diffusion coefficient to be parameterized or perturbed \cite{PSW23,ZD21}.
As we will see in \Cref{s:deker}, this is essentially another manifestation of the main shortcoming of kernel methods: the noise is relatively small compared to the perturbation.
This issue is particularly important in data-related applications, where we typically wish to choose an SDE as a model for a given distribution, and the diffusion coefficient has a strong impact on the shape of that distribution.
The Bismut formula \cite{Bismut84,EL94,PW19bismut,HM06} provides a related idea, but it applies only to derivatives with respect to the initial condition.
Even in that degenerate case, our proof is new: it is based on the pathwise comparison idea introduced in this paper.

The path-kernel method in this paper unifies the path-perturbation method and the kernel-differentiation method.
It provides a gradient tool for the optimization of diffusion processes, especially in unstable settings.
In a follow-up paper \cite{apk}, we give the adjoint path-kernel formula and use it to solve the \textit{data assimilation} problem in a general setting with chaotic high-dimensional dynamics, partial and noisy observations, unknown parameters, a single loss over a long time window, and no synchronization assumption; previous approaches typically require removing at least two of these difficulties because no sufficiently powerful gradient tool was available.


In another recent work, we derived the divergence-kernel formula for the score and for the linear response of the marginal or stationary \textit{density} of SDEs \cite{divKer,DKlinR}.
This yields an efficient algorithm for these density derivatives and enables a new generative modeling framework, DK-SDE, in which a parameterized SDE is trained by directly minimizing the KL divergence between the empirical data distribution and the SDE marginal density.
DK-SDE can learn distributions using substantially more general SDE models than diffusion- or score-based methods, which are restricted to backward SDEs associated with linear SDEs.
In contrast, the path-kernel method developed in the present paper gives only the derivative of an averaged observable, but it allows differentiation with respect to singular initial conditions.
Moreover, the cost of the path-kernel method increases only moderately with the largest Lyapunov exponent, whereas the divergence-kernel method is affected by the most negative Lyapunov exponent.
Thus, the two methods are useful for different computational statistics tasks, and further unifying them would lead to an even more powerful framework, as envisioned in the triad program proposed in \cite{Ni_kd}.

\subsection{Structure of paper}
\label{s:structure}

\Cref{s:derive} derives and proves the new formula.
\Cref{s:intui} explains the main intuition behind the proof, namely, the gradual transfer of the path perturbation to the probability kernel.
It also gives an intuitive proof based on directly comparing bundles of paths across different parameter values.
\Cref{s:discrete} derives the formula for discrete-time random dynamical systems.
\Cref{s:cts} proves the convergence to the continuous-time limit for SDEs.
\Cref{s:infinite} then formally passes to the infinite-time limit.

\Cref{s:numeric} discusses practical issues arising in applications.
\Cref{s:howtouse} gives a rough estimate of the variance of the estimator when \(\alpha\) is a constant, and explains how to choose \(\alpha\).
\Cref{s:procedure} provides a detailed description of the algorithm.
\Cref{s:example} applies the ergodic version of the algorithm to the noisy Lorenz--96 system. Its deterministic part appears not to admit a linear response, but after adding noise, the path-kernel algorithm gives a reasonable reflection of the relation between the long-time averaged observable and the parameter.
This problem cannot be handled by previous methods, since the system is unstable and the diffusion coefficient is also parameterized.

In the appendix, \Cref{s:proof2} gives a less intuitive proof sketch based on the Girsanov theorem.
\Cref{s:degen} shows how our result degenerates to several well-known formulas.
\Cref{s:depath} shows that if we set \(\alpha\equiv 0\), then the formula reduces to the pure path-perturbation formula.
\Cref{s:deker} shows that when the diffusion is independent of \(\gamma\), we may set \(\alpha\equiv 1/\Dt\), in which case the formula reduces to the pure kernel-differentiation formula.
\Cref{s:degauss} considers the case with no drift term and with diffusion depending on \(\gamma\), in order to illustrate why incorporating the path-perturbation idea resolves the essential difficulty of the kernel method.
\Cref{s:debis} shows that if the dynamics do not depend on \(\gamma\), then the formula reduces to the Bismut formula.

In a follow-up paper \cite{apk}, we give the adjoint path-kernel formula and use it to solve the data assimilation problem in a general setting.
In another recent line of work, we derived the divergence-kernel formula for the score and for the linear response of the marginal density of SDEs, which significantly broadens the class of models available for generative modeling \cite{divKer,DKlinR}.

\section{Deriving and proving the formula}
\label{s:derive}

\subsection{Notations and assumptions}
\label{s:notation}

In the Euclidean space \(\R^M\), 
consider the SDE
\[
dX_t^\gamma = F^\gamma(X_t^\gamma)\,dt + \sigma^\gamma(X_t^\gamma)\,dB_t,
\quad
X_0^\gamma = x_0 + \gamma v_0.
\]
Here $F^\gamma:\R^M\to \R^M$ is a vector field, and $\sigma^\gamma:\R^M\to \R$ is a scalar-valued function;
\(B\) denotes a standard Brownian motion, \(x_0\) is the deterministic initial condition, and \(v_0\) is the perturbation direction of the initial condition.
All SDEs in this paper are understood in the It\^o sense, so their time-discretized version takes the form given in \Cref{e:discreteSDE}.

Here \(\gamma\) is the parameter controlling the dynamics and the initial condition; by default \(\gamma=0\), so for example we write \(X_t:=X_t^{\gamma=0}\).
We denote the perturbation by
\[
\delta(\cdot):=\partial(\cdot)/\partial\gamma\big|_{\gamma=0}.
\]
We use $\nabla$ to denote spatial derivatives, and
\begin{equation*}\begin{split}
  \nabla_v F := \nabla F \, v = \pp F x v.
\end{split}\end{equation*}
Hence, $\nabla_vF(x)$ is the Riemannian derivative of the vector field $F$ along the direction $v$, evaluated at $x$ (note that $v$ must be a vector at $x$).
Let \(\Phi\) be a fixed observable function.
Given a finite time interval \([0,T]\), our goal is to derive a pathwise expression for \(\delta \E{\Phi(X_T^\gamma)}\).
Let \(\cF_t\) be the \(\sigma\)-algebra generated by 
\(\{B_\tau\}_{\tau\le t}\) 
and \(X_0\) (we could also let \(X_0\) be distributed according to a given measure, without changing anything essential).
We take \(\alpha_t\) to be a scalar process, which we call the `schedule'.

We omit the subscript \(t\) when no confusion arises.
For random variables that may also appear as dummy variables in integrals, such as \(X\) and \(B\), we use uppercase letters for the random variables and the corresponding lowercase letters ($x$ and $b$) for values they may take or for dummy variables.
For functions of such variables, such as \(v_t\) and \(\alpha_t\), we do not change the letter case.

We now state some technical assumptions.
These assumptions can be relaxed; for instance, the boundedness and Lipschitz conditions on \(\alpha_t\) can be significantly weakened.
However, in this paper we restrict ourselves to proving the result in a simpler setting.

\begin{assumption} \label{ass}
For a parameter interval $I=[\gamma_{\min},\gamma_{\max}]$ around zero,
assume that there exist constants $L, K, \underline\sigma, \sigma^*, \alpha^*, L'>0$ such that
the partial derivatives $\nabla F$, $\delta  F$, $\delta \sigma$, and $\nabla \Phi$ exist,
and for all $\gamma\in I$, $t, s \in [0,T]$, and $x, y\in\R^M$, 
\begin{enumerate}[label=(A\arabic*), labelindent=0pt, leftmargin=*, align=left]
\item (Lipschitz in $x$ for $F, \sigma, \delta F, \delta \sigma $)
\[
|F^\gamma(x)-F^\gamma(y)| ,\,
|\sigma^\gamma(x)-\sigma^\gamma(y)| ,\,
|\delta  F^\gamma(x)-\delta  F^\gamma(y)| ,\,
|\delta \sigma^\gamma(x)-\delta \sigma^\gamma(y)| \le L|x-y|.
\]
\item (Linear growth for $F, \sigma, \delta F, \delta \sigma $)
\[
|F(\gamma,x)|^2 ,\,
|\sigma(\gamma,x)|^2 ,\,
|\delta  F(\gamma,x)|^2 ,\,
|\delta \sigma(\gamma,x)|^2 \le K(1+|x|^2).
\]
\item $\alpha$ is \(\cF_t\)-adapted with a.s. Lipschitz and bounded paths,
\[
|\alpha_t| \le \alpha^* < \infty ,
\quad \textnormal{} \quad
|\alpha_t - \alpha_s| \le C |t-s|.
\]
\item (Uniform nondegeneracy of $\sigma$)
\[
|\sigma(\gamma,x)| \ge \sigma^* > 0.
\]
\item (Lipschitz for $\Phi, \nabla \Phi$)
\[
|\Phi(x)-\Phi(y)| ,\, |\nabla \Phi(x) - \nabla \Phi(y)| \le L'|x-y|.
\]
\end{enumerate}
\end{assumption}

Our derivation is performed on the time span divided into small segments of length $\Dt$.
Let $N$ be the total number of segments, so $N\Dt = T$.
Denote
\[ 
  \DB_n:= B_{n+1} - B_n.
\]
Denote $\alpha_n = \alpha_{n\Dt}$.
The discretized SDE is
\begin{equation} \label{e:discreteSDE}
X^\gamma_{n+1} - X^\gamma_{n}
=F^\gamma(X^\gamma_n) \Dt + \sigma^\gamma(X^\gamma_n)\DB_n,
\quad
X_0^\gamma = x_0 + \gamma v_0.
\end{equation}


\subsection{Intuition}
\label{s:intui}

We now give some intuitive ideas behind the derivation, which will be proved rigorously in later sections.
Our derivation is elementary and is based on directly comparing paths.
More specifically, we compare the location, probability density, and volume of bundles of paths across different values of \(\gamma\).

First, partition the time interval into small segments of length \(\Dt\).
Our goal is to understand what contributes to \(\E{\Phi(X_N^\gamma)} - \E{\Phi(X_N)}\) and then compute it.
For an SDE, there are many possible paths starting from \(x_0\) and \(x_0^\gamma\), so we need a rule for pairing paths from the two dynamics in order to compare them path by path and obtain the difference in \(\Phi\) at the final step.

To do this, we express \(x_t^\gamma\) in terms of \(x_t\) over the entire time interval by writing
\[
x_t^\gamma = x_t + \gamma v_t,
\]
where \(v_t\) depends on \(v_0\) and the previous values of \(x\).
Since a single path has zero probability, we instead compare bundles of paths: a bundle under \(\gamma=0\) and the corresponding bundle under nonzero \(\gamma\).
In \Cref{f:intuition}, these two bundles are illustrated in blue and red, respectively.

\begin{figure}[ht]
\centering
  \includegraphics[width=0.45\textwidth]{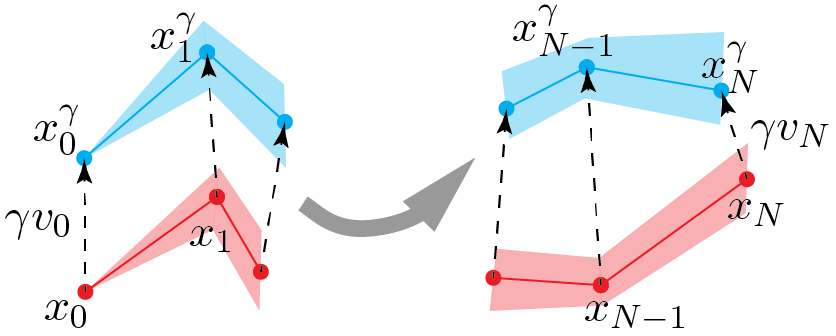}
  \caption{Intuition of the derivative. For the two different values of \(\gamma\), we compare the blue bundle with the red bundle.}
  \label{f:intuition}
\end{figure}

There are three factors in the comparison between the red bundle and the blue bundle in \Cref{f:intuition}.
The first factor is that \(x_N^\gamma \neq x_N\), which induces a difference in \(\Phi\).
This is the path-perturbation contribution, represented in the next subsection by the term \(\nabla\Phi(x_N)\cdot v_N\).

Next, we must take into account that the probability of obtaining the blue bundle is different from that of obtaining the red bundle.
This is captured by differentiating the probability kernel, and it further splits into two factors.
The second factor is that the probability density assigned to each path is different.
This corresponds to the term \(\delta p(x_n^\gamma,x_{n+1}^\gamma)\) in the next subsection.

The third factor is that the corresponding bundles do not occupy the same Lebesgue volume.
Once we specify the correspondence between the sequences \(\{x_n^\gamma\}_{n\ge 0}\) and \(\{x_n\}_{n\ge 0}\), we also specify a correspondence between a region in path space for \(\{x_n^\gamma\}_{n\ge 0}\) and a region in path space for \(\{x_n\}_{n\ge 0}\).
These two regions may have different volumes, and this also contributes to the total probability they carry.
This is the term \(\delta J_{n+1}^\gamma\) in the next subsection.

How should we choose the correspondence between \(x_n^\gamma\) and \(x_n\)?
A first natural guess is to fix a background Brownian motion and let \(x_n^\gamma\) and \(x_n\) be generated by the same Brownian increments, but under different dynamics.
That is, for the same \(\Delta b_n\), let
\begin{equation*}\begin{split}
x_{n+1} &= x_n + F(x_n)\Dt + \sigma(x_n)\Delta b_n, \\
x_{n+1}^{\prime\gamma} &= x_n^{\prime\gamma} + F^\gamma(x_n^{\prime\gamma})\Dt + \sigma^\gamma(x_n^{\prime\gamma})\Delta b_n .
\end{split}\end{equation*}
Ignoring higher-order terms, this is equivalent to writing
\begin{equation*}\begin{split}
x_{n+1}^{\prime\gamma} \approx x_{n+1} + \gamma u_{n+1},
\end{split}\end{equation*}
where \(u_n\) is the pure path perturbation, defined inductively by
\begin{equation*}\begin{split}
u_{n+1}
= u_n + \nabla_{u_n}F(x_n)\Dt + \delta F^\gamma(x_n)\Dt
+ \nabla_{u_n}\sigma(x_n)\Delta b_n + \delta \sigma^\gamma(x_n)\Delta b_n.
\end{split}\end{equation*}
Because the same \(\Delta b_n\) is used for both \(x_{n+1}\) and \(x_{n+1}^{\prime\gamma}\), the two paths have the same probability. 
Hence, the total contribution of the second and third factors vanishes, although they are not individually zero.
Thus we only need to compute the first factor.
This is the pure path-perturbation method, which does not work well for unstable systems; 
see \Cref{s:depath} for how our result degenerates to the pure path-perturbation formula.

A second guess is to simply set \(x_n^\gamma \equiv x_n\).
Then the first and third factors vanish, and only the change in the probability density remains.
This is the pure kernel-differentiation method, which does not allow parameterized diffusion;
see \Cref{s:deker} for how our result degenerates to the pure kernel-differentiation formula.

The main idea of this paper is to define \(x_{n+1}^\gamma\) by
\begin{equation*}\begin{split}
x^\gamma_{n+1}
=
x^\gamma_n
-\alpha_n(x_n^\gamma-x_n)\Dt
+F^\gamma(x_n^\gamma)\Dt
+\sigma^\gamma(x_n^\gamma)\Delta b_n,
\end{split}\end{equation*}
where the term with \(\alpha_n>0\) pulls \(x^\gamma_{n+1}\) toward \(x_{n+1}\), and hence damps the divergence between the two paths.
At the same time, we still want \(\{x_n^\gamma\}_{n\ge0}\) to be a path of the \(\gamma\)-dynamics.
To achieve this, the additional damping term must be absorbed into the Brownian motion.
Ignoring higher-order terms, the corresponding Brownian increment is
\begin{equation*}\begin{split}
\Delta b_n^\gamma
\approx
\Delta b_n-\frac{\alpha_n}{\sigma(x_n)}(x_n^\gamma-x_n)\Dt.
\end{split}\end{equation*}
Hence \(x_{n+1}^\gamma\) is still generated by the \(\gamma\)-dynamics, but with a modulated Brownian motion:
\begin{equation*}\begin{split}
x^\gamma_{n+1}
=
x^\gamma_n+F^\gamma(x_n^\gamma)\Dt+\sigma^\gamma(x_n^\gamma)\Delta b_n^\gamma.
\end{split}\end{equation*}
Since we use different Brownian motions, the probabilities of obtaining the two paths are different, and this must be accounted for by the second and third factors.

Our definition of \(x_n^\gamma\) is equivalent to
\begin{equation*}\begin{split}
x^\gamma_{n+1}
&\approx x_{n+1}+\gamma v_{n+1},\\
v_{n+1}
&=
v_n-\alpha_n v_n\Dt+\nabla_{v_n}F(x_n)\Dt+\delta F^\gamma(x_n)\Dt
+\nabla_{v_n}\sigma(x_n)\Delta b_n+\delta\sigma^\gamma(x_n)\Delta b_n.
\end{split}\end{equation*}
Thus we subtract a small portion, \(\alpha_n v_n\Dt\), from the path-perturbation equation.
This portion is no longer propagated to later times; instead, it is transferred to kernel differentiation.
The effect of the portion shifted to kernel differentiation will be given in \Cref{e:shift}.
On the other hand, the remaining part of the path perturbation is propagated to the next step, and part of it may still reach step \(N\) and contribute to the perturbation of \(\Phi(X_N)\).
This tempered path perturbation gives our rule for pathwise comparison.

Finally, we remark that our derivation and proof are well suited for intuitively understanding and discovering the formula.
This proof has a structure similar to that of our equivariant divergence formula for deterministic hyperbolic systems \cite{TrsfOprt,vdivF}, and is therefore useful for future developments such as a unification with the divergence method.
\Cref{s:proof2} gives a sketch of a proof based on the Girsanov theorem, which is shorter to write but much less intuitive.

\subsection{Deriving discrete-time formula}
\label{s:discrete}

\subsubsection{Changing variables}
\label{s:changevar}

Partition the interval into small segments of length \(\Dt\), so that the dynamics are governed by \Cref{e:discreteSDE}.
The main result of this subsubsection is \Cref{e:ephi2}, which rewrites the perturbed expectation in terms of the unperturbed path variables.
We obtain it by changing the dummy variables from the perturbed path to the unperturbed path according to a prescribed correspondence between paths for different values of \(\gamma\).

After partitioning the time interval, \(\E{\Phi(X_N^\gamma)}\) has the expression
\begin{equation} \label{e:ephi}
\E{\Phi(X^\gamma_N)} =
\int_{x_1}\cdots\int_{x_{N}}
\Phi(x^\gamma_N)\, p(x^\gamma_0,x^\gamma_1)\cdots p(x^\gamma_{N-1},x^\gamma_N)\,
dx^\gamma_1\cdots dx^\gamma_N.
\end{equation}
Note that the dummy variables here are the path \(\{x_n^\gamma\}_{n=0}^N\), not the Brownian increments \(\{\Db_n\}_{n=0}^{N-1}\).
Here \(p(x_n^\gamma,x_{n+1}^\gamma)\) denotes the density of \(X_{n+1}^\gamma\) conditioned on \(X_n^\gamma\); it is a Gaussian function given by
\[
p(x^\gamma_n,x^\gamma_{n+1})
= (2\pi\Delta t)^{-\frac M2}
(\sigma^\gamma)^{-M}(x^\gamma_n)
\exp{-\frac{|x^\gamma_{n+1} - x^\gamma_n - F^\gamma(x^\gamma_n) \Delta t|^2 }{2(\sigma^\gamma)^2(x^\gamma_n)\Dt}}.
\]

For each \(n\), we sequentially change the variable from the perturbed coordinate \(x_{n+1}^\gamma\) to the unperturbed coordinate \(x_{n+1}\) according to
\begin{equation}\label{e:changeVar}
  x_{n+1}^\gamma = x_{n+1} + \gamma v_{n+1},
\end{equation}
where \(v_n\), starting from \(v_0\), solves
\begin{equation} \label{e:vDt}
v_{n+1} 
= v_n - \alpha_n v_n \Dt
+ \delta F^\gamma (x^\gamma_n) \Dt
+ \delta\sigma^\gamma(x^\gamma_n) \Db_n.
\end{equation}
Here the total derivatives are
\[
\delta F^\gamma (x^\gamma_n) := \nabla_{v_n}F(x_n) + \delta F^\gamma(x_n),
\quad
\delta \sigma^\gamma (x^\gamma_n) := \nabla_{v_n}\sigma(x_n) + \delta \sigma^\gamma (x_n).
\]

We change variables sequentially. When changing the variable \(x_{n+1}^\gamma\) to \(x_{n+1}\), the variables \(\{x_i^\gamma\}_{i\le n}\) have already been changed.
Hence, for the \((n+1)\)-st change of variable, the Jacobian is
\[
J_{n+1}^\gamma = \det \pp{x_{n+1}^\gamma}{x_{n+1}},
\]
with \(\{x_i^\gamma\}_{i\le n}\) fixed in the partial derivative matrix.
Therefore, we need to determine how \(x_{n+1}\) appears in the expression of \(x_{n+1}^\gamma\) when the latter is viewed as a function of the unperturbed coordinates \(\{x_i\}_{i\le n+1}\).

Due to the definition of \(x_{n+1}^\gamma\), we need to express \(v_{n+1}\) in the unperturbed coordinates.
In the expression for \(v_{n+1}\), only \(\Db_n\) depends on \(x_{n+1}\), whereas all other terms, including \(\alpha_n\), have already been determined by \(\{x_i\}_{i\le n}\) (here we use the adaptedness of \(\alpha_n\)).
So it remains to express \(\Db_n\) in the unperturbed coordinates:
\begin{equation}\label{e:db}
\Db_n = \frac 1{\sigma(x_n)} \left(x_{n+1}-x_n - F(x_n)\Dt\right).
\end{equation}
Summarizing, the change-of-variable formula is
\[
x_{n+1}^\gamma = x_{n+1} \left( 1+  \frac \gamma {\sigma(x_n)} \delta\sigma^\gamma(x^\gamma_n) \right)
+ \textnormal{terms depending on } x_0,\ldots,x_n.
\]
Hence, the Jacobian determinant for the change of variable is
\[
J^\gamma_{n+1} = \det \pp{x_{n+1}^\gamma}{x_{n+1}}
= \left( 1+  \frac \gamma {\sigma(x_n)} \delta\sigma^\gamma(x^\gamma_n)  \right)^M
\\
= 1+  \frac {M\gamma} {\sigma(x_n)} \delta\sigma^\gamma(x^\gamma_n) 
+ \textnormal{higher-order terms in } \gamma.
\]

After changing variables to the \(x_n\)'s, \Cref{e:ephi} becomes
\begin{equation} \label{e:ephi2}
\E{\Phi(X^\gamma_N)} = 
\int_{x_1}\cdots\int_{x_{N}}
\Phi(x^\gamma_N) p(x^\gamma_0,x^\gamma_1)J^\gamma_1\cdots p(x^\gamma_{N-1},x^\gamma_N)J^\gamma_N
dx_1\cdots dx_N,
\end{equation}
Note that here the \(x_n^\gamma\)'s and \(J_n^\gamma\)'s are functions of the \(x_n\)'s, that is, all perturbed quantities are now pulled back to the unperturbed path coordinates.
From now on, unless otherwise noted, we \textit{only} use the \(x_n\)'s as dummy variables when expressing expectations by multiple integrals.

\subsubsection{Taking derivative}
\label{s:diff}

In this subsubsection, we differentiate \Cref{e:ephi2} with respect to \(\gamma\) at \(\gamma=0\).
The main result below is also useful for numerical analysis.

\begin{lemma}[discrete-time differentiation]
\label{l:Dt}
\[
\delta \E{\Phi(X^\gamma_N)}
=
\E{ \nabla \Phi(X_N) \cdot v_N
+\Phi(X_N) \sum_{n=0}^{N-1} \frac{\DB_n }{\sigma(X_n)}
\cdot \alpha_n v_n.
}
\]
\end{lemma}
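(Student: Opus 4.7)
The plan is to start from the already-derived representation in \Cref{e:ephi2}, where $\E{\Phi(X^\gamma_N)}$ is written as a multiple integral over the unperturbed dummy variables $x_1,\ldots,x_N$. Since everything inside the integral depends smoothly on $\gamma$ and the domain of integration no longer moves with $\gamma$, I would differentiate under the integral at $\gamma=0$ by the product rule. This yields exactly three kinds of contributions: one from $\Phi(x^\gamma_N)$, one for each transition density $p(x^\gamma_n,x^\gamma_{n+1})$, and one for each Jacobian factor $J^\gamma_{n+1}$. Evaluating at $\gamma=0$ sets $x^\gamma_n = x_n$ and $J^\gamma_{n+1}=1$, so the remaining product of $p(x_n,x_{n+1})$'s converts the multiple integral back into an expectation.

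The $\Phi$-derivative is immediate: $\delta\Phi(x^\gamma_N)|_{\gamma=0} = d\Phi(x_N)v_N$ since $x^\gamma_N = x_N+\gamma v_N$. Thus the whole lemma reduces to the stepwise identity
\[
\delta\bigl[\log p(x^\gamma_n,x^\gamma_{n+1}) + \log J^\gamma_{n+1}\bigr]_{\gamma=0} \;=\; \frac{\alpha_n v_n\cdot \DB_n}{\sigma(x_n)}.
\]
To check this, I would split $\log p$ into the normalization piece $-M\log\sigma^\gamma(x^\gamma_n)$ and the exponent piece $-Q/2$, where $Q=|x^\gamma_{n+1}-x^\gamma_n-F^\gamma(x^\gamma_n)\Dt|^2/((\sigma^\gamma)^2(x^\gamma_n)\Dt)$. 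The Jacobian computation already in the paper gives $\delta\log J^\gamma_{n+1}=M\,\delta\sigma^\gamma(x^\gamma_n)/\sigma(x_n)$, which exactly cancels $\delta[-M\log\sigma^\gamma(x^\gamma_n)]$. So the entire answer will come from $\delta[-Q/2]$.

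For $\delta Q$ I would use the recursion \Cref{e:vDt}, which gives $v_{n+1}-v_n-\delta F^\gamma(x^\gamma_n)\Dt = -\alpha_n v_n\Dt + \delta\sigma^\gamma(x^\gamma_n)\DB_n$. Using \Cref{e:db} to identify $\sigma(x_n)\DB_n$ as the base-point of the argument, $\delta Q$ has two sources of $\delta\sigma^\gamma\,|\DB_n|^2/(\sigma\Dt)$: a positive one from differentiating the inner product, and a negative one from differentiating $(\sigma^\gamma)^{-2}$. These $|\DB_n|^2$ terms cancel \emph{pathwise}, leaving $\delta Q = -2\,\alpha_n v_n\cdot\DB_n/\sigma(x_n)$, hence the desired identity. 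Summing over $n$ and returning to expectation notation finishes the proof.

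The main obstacle I anticipate is making sure the cancellation between the normalization piece, the Jacobian piece, and the $|\DB_n|^2$ terms inside $\delta Q$ is indeed exact and not merely in expectation; the whole point of the change of variables in \Cref{s:changevar} is precisely to make this cancellation pathwise, and the fact that $\delta F^\gamma$ and $\delta\sigma^\gamma$ drop out of the final answer (they are already absorbed into $v_n$) is a useful sanity check. Everything else — differentiation under the integral, interpreting sums of integrals as an expectation — is a routine bookkeeping step for the discrete-time case.
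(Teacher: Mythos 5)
Your proposal is correct and follows essentially the same route as the paper's proof: differentiate \Cref{e:ephi2} under the integral via the Leibniz rule, note that the $\Phi$-term gives $d\Phi(X_N)v_N$, and verify per step that the normalization piece of $\log p$ cancels against $\delta J^\gamma_{n+1}$ while the $|\DB_n|^2$ contributions cancel pathwise once the recursion \Cref{e:vDt} is inserted, leaving $\alpha_n v_n\cdot\DB_n/\sigma(x_n)$. The paper writes this as a single cancellation in $\delta p/p + \delta J^\gamma_{n+1}$ (its \Cref{e:shift}) rather than splitting $\log p$ into normalization and exponent, but the computation is identical.
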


\begin{proof}
Note that when \(\gamma\) takes the default value $0$, we have \(x_n^\gamma=x_n\), and hence \(J_n=1\).
Differentiating \Cref{e:ephi2} at \(\gamma=0\) and applying the Leibniz rule, we obtain
\begin{equation} \label{e:ephi3}
\delta \E{\Phi(X^\gamma_N)}
= 
\E{\delta \Phi(X^\gamma_N) }
+ \sum_{n=0}^{N-1} \E{\Phi(X_N) \left(\frac {\delta p(X^\gamma_n,X^\gamma_{n+1})}{p(X_n,X_{n+1})} + \delta J^\gamma_{n+1}\right)}.
\end{equation}

We now compute each of these terms.
For the first term, since \(\gamma=0\),
\[
\delta \Phi(X^\gamma_N)
= \nabla\Phi \cdot \delta X_N
= \nabla\Phi \cdot v_N.
\]

For the second term,
\[
\frac {\delta p(X^\gamma_n,X^\gamma_{n+1})}{p(X_n,X_{n+1})}
= - M \frac{\delta \sigma^\gamma (X^\gamma_n)}{\sigma(X_n)}
- \frac{\DB_n \cdot (v_{n+1} - v_n - \delta F^\gamma(X^\gamma_n) \Delta t)}{\sigma(X_n) \Dt}
+ \frac{\DB_n^2 \, \delta\sigma^\gamma(X^\gamma_n) }{\sigma(X_n)\Dt}.
\]
Here \(\DB_n\) is viewed as a function of \(X_n\) and \(X_{n+1}\), with expression given in \Cref{e:db}, while \(v_n\) is a function of \(\{X_i\}_{0\le i \le n}\); moreover, \(\DB_n^2 := \DB_n\cdot\DB_n\).

For the third term,
\[
\delta J^\gamma_{n+1}
= \frac{M} {\sigma(X_n)} \delta \sigma^\gamma (X^\gamma_n) .
\]

The summand in the sum in \Cref{e:ephi3} is the perturbation applied to the probability kernel at each step.
After cancellation, we obtain
\begin{equation} \label{e:shift}
\frac {\delta p(X^\gamma_n,X^\gamma_{n+1})}{p(X_n,X_{n+1})}
+ \delta J^\gamma_{n+1}
= 
- \frac{\DB_n }{\sigma(X_n) \Dt}
\cdot (v_{n+1} - v_n - \delta F^\gamma(X^\gamma_n) \Dt - \delta\sigma^\gamma(X^\gamma_n) \DB_n).
\end{equation}
By the definition of \(v_n\) in \Cref{e:vDt},
\[
\frac {\delta p(X^\gamma_n,X^\gamma_{n+1})}{p(X_n,X_{n+1})}
+ \delta J^\gamma_{n+1}
= \frac{\DB_n }{\sigma(X_n) \Dt}
\cdot \alpha_n v_n \Dt
= \frac{\DB_n }{\sigma(X_n)}
\cdot \alpha_n v_n.
\]
For the pure path-perturbation method, \(\alpha\equiv0\), so the above term vanishes since nothing is shifted to the kernel.
In our method, however, we shift a small portion of the path perturbation, proportional to \(\alpha\Dt\), and use it to differentiate the kernel.

Putting everything together gives the expression in the lemma.
\end{proof}

\subsection{Passing to the continuous-time limit}
\label{s:cts}

We now let \(\Dt\to 0\) and pass to the continuous-time setting.
The main idea of the proof is that all bounds in \Cref{ass} are uniform in \(\gamma\), which allows us to prove \(\gamma\)-uniform convergence of the discrete \(\gamma\)-derivatives.
This in turn implies differentiability of \(\E{\Phi(X_T^\gamma)}\) in the SDE limit as \(\Dt\to 0\).

\goldbach*

\begin{proof}
Note that the equations for \(X_n\) in \Cref{e:discreteSDE} and for \(v_n\) in \Cref{e:vDt} form the Euler--Maruyama scheme for the SDEs of \(X_t\) and \(v_t\) in the theorem.
We denote by \(\bar X_t^{\gamma,N}\) the continuous-time piecewise-constant process associated with the discrete-time solution \(\{X_n^\gamma\}\) when the time interval \([0,T]\) is partitioned into \(N\) subintervals, namely,
\[
\bar X_t^{\gamma,N} = X_n^\gamma
\qquad \text{for } t\in[n\Delta t,(n+1)\Delta t).
\]
Similarly, for \(\alpha\) we define \(\bar\alpha_t^{N}:=\alpha_{n\Dt}\) for \(t\in[n\Delta t,(n+1)\Delta t)\).
Also, in this proof, we use \(\delta\) to denote differentiation taken at an \textit{arbitrary} \(\gamma\in I\), since we need estimates that are uniform in \(\gamma\).

Under \Cref{ass} (A1)--(A3), standard Euler--Maruyama theory gives the solution bounds, strong convergence, and convergence rates of \(\bar X_t^{\gamma,N}\) in \(L^1\), \(L^2\), and \(L^4\) \cite{E2019}.
We then apply the Euler--Maruyama theory again to \(\bar v_t^{\gamma,N}\) to obtain the same bounds and convergence rate; here \(X_t\) enters the SDE for $v_t$ as an additional stochastic term, which is also covered by the standard theory.
In conclusion, there exists \(C>0\) (depending only on \(x_0\), \(v_0\), \(T\), \(L\), \(K\), and \(\alpha^*\)) such that, for all sufficiently large \(N\),
\begin{equation}\begin{split} \label{e:strongXv}
\sup_{\gamma\in I}\,
\E{\sup_{0\le t\le T} |X_t^\gamma - \bar X_t^{\gamma,N}|^p
+ |v_t^\gamma-\bar v_t^{\gamma,N}|^p}
\le C \Dt^{\frac p2},
\quad \textnormal{where} \quad 
p = 1,2,4.
\end{split}\end{equation}

To prove convergence of \(\E\Phi\), note that by assumption (A5) and \Cref{e:strongXv}, \(\Phi(\bar X_T^{\gamma,N})\to\Phi(X_T^\gamma)\) in \(L^1\) uniformly in \(\gamma\).
That is, if we define \(f_N(\gamma):=\E{\Phi(\bar X_T^{\gamma,N})}\) and \(f(\gamma):=\E{\Phi(X_T^\gamma)}\), then
\begin{equation*}\label{eq:unif-f}
  \sup_{\gamma\in I}|f_N(\gamma)-f(\gamma)| \rightarrow 0,
  \quad \textnormal{as } \Dt\to0.
\end{equation*}

We next prove the uniform convergence of the derivatives.
By \Cref{l:Dt}, the linear response of the time-discretized SDE is
\[
  \delta f_N(\gamma)
  :=\delta \E{\Phi(\bar X_T^{\gamma,N})}
  =\delta \E{\Phi(X_N^{\gamma})}
  =
  \E{ \nabla \Phi(X^\gamma_N) \cdot v_N^\gamma
  +\Phi(X^\gamma_N) \sum_{n=0}^{N-1} \frac{\DB_n }{\sigma^\gamma(X^\gamma_n)}
  \cdot \alpha_n v^\gamma_n},
\]
and we define the limiting functional by
\begin{equation*}\begin{split}
  g(\gamma)
  :=
  \E{
  \nabla\Phi(X_T^\gamma)\cdot v_T^\gamma
  +
  \Phi(X_T^\gamma)\int_0^T
  \frac{\alpha_t\, v_t^\gamma}{\sigma^\gamma(X_t^\gamma)}\cdot dB_t
}.
\end{split}\end{equation*}
By (A5) and \Cref{e:strongXv}, \(\Phi(\bar X_T^{\gamma,N})\to\Phi(X_T^\gamma)\), \(\nabla\Phi(\bar X_T^{\gamma,N})\to\nabla\Phi(X_T^\gamma)\), and \(\bar v_T^{\gamma,N}\to v_T^\gamma\) in \(L^2\), uniformly in \(\gamma\).

We prove the \(L^2\)-convergence of the discrete sum in \(\delta f_N\):
\[
  \E{\left|
  \sum_{n=0}^{N-1}\frac{\alpha_n\, v_n^{\gamma}}{\sigma^\gamma (X_n^{\gamma})}\Delta B_n
  - \int \frac{\alpha_t\, v_t^{\gamma}}{\sigma^\gamma (X_t^{\gamma})} d B
  \right|^2 }
=
  \E{\left|
  \int_0^T 
  \frac{\bar \alpha^N _t \,\bar v_t^{\gamma,N}}{\sigma^\gamma(\bar X_t^{\gamma,N})}
  - \frac{\alpha_t\, v_t^{\gamma}}{\sigma^\gamma (X_t^{\gamma})} d B
  \right|^2 }.
\]
By It\^o isometry,
\[
=
  \E{
  \int_0^T 
    \left|
  \frac{\bar \alpha^N _t \,\bar v_t^{\gamma,N}}{\sigma^\gamma(\bar X_t^{\gamma,N})}
  - \frac{\alpha_t\, v_t^{\gamma}}{\sigma^\gamma (X_t^{\gamma})}
  \right|^2 dt
}.
\]
We separate this expression into three parts and apply the uniform bounds on \(\alpha\) and \(\sigma\).
In the next equation, we use the shorthand
\(\bar \alpha_t:= \bar \alpha^N _t\),
\(\bar v_t^{\gamma} := \bar v_t^{\gamma,N}\),
\(\bar \sigma_t^\gamma:=\sigma^\gamma(\bar X_t^{\gamma,N})\), 
and \(\sigma_t^\gamma:=\sigma^\gamma(X_t^{\gamma})\).
\begin{equation*}\begin{split}
  =
\E{
  \int_0^T
  \left|
  \frac{\bar \alpha_t}{\bar \sigma_t^\gamma}
  (\bar v_t^{\gamma} - v_t^{\gamma})
+
  \frac{v^\gamma_t}{\bar\sigma_t^\gamma}
  ( \bar \alpha_t - \alpha_t)
+
  \alpha_t v^\gamma_t
  \left(
  \frac{1}{\bar \sigma_t^\gamma}
  - \frac{1}{\sigma_t^\gamma}
  \right)
  \right|^2 dt
}
\\
\le
3\E{
  \int_0^T
  \frac{\bar \alpha^2_t}{(\bar \sigma_t^\gamma)^2}
  |\bar v_t^{\gamma} - v_t^{\gamma} |^2 dt
}
+
3\E{
  \int_0^T
  \frac{|v^\gamma_t|^2}{(\bar\sigma_t^\gamma)^2}
  ( \bar \alpha_t - \alpha_t) ^2 dt
}
+
3\E{
  \int_0^T
  \alpha_t^2 |v^\gamma_t|^2
  \left(
  \frac{1}{\bar \sigma_t^\gamma}
  - \frac{1}{\sigma_t^\gamma}
  \right)^2 dt
}
\\
\le
\frac{3 \alpha^{*2}_t}{\sigma_t^{*2}}
\E{ \int_0^T |\bar v_t^{\gamma} - v_t^{\gamma} |^2 dt }
+
\frac{3 }{\sigma_t^{*2}}
\E{ \int_0^T |v^\gamma_t|^2 ( \bar \alpha_t - \alpha_t) ^2 dt }
+
3 \alpha^{*2}
\E{
  \int_0^T
  |v^\gamma_t|^2
  \left(
  \frac{1}{\bar \sigma_t^\gamma}
  - \frac{1}{\sigma_t^\gamma}
  \right)^2 dt
}.
\end{split}\end{equation*}
To see the \(\gamma\)-uniform convergence of the first term to \(0\), use the \(\gamma\)-uniform \(L^2\)-convergence of \(\bar v^\gamma_t\to v^\gamma_t\) in \Cref{e:strongXv}.
For the second term, use the \(\gamma\)-uniform \(L^4\)-bound of \(v^\gamma_t\), and then the \(L^4\)-convergence of \(\bar \alpha_t \to \alpha_t\), which follows from the Lipschitz-path assumption on \(\alpha_t\) in (A3).
For the third term, use the \(L^4\)-bound of \(v^\gamma_t\), together with the Lipschitz condition of \(\sigma(\cdot)\) in (A1) and the \(L^4\)-convergence of \(\bar X_t \to X_t\).

In summary, we have proved the \(\gamma\)-uniform \(L^2\)-convergence of each term in \(\delta f_N(\gamma)\).
Hence, \(\delta f_N\to g\) uniformly in \(\gamma\).
Together with the uniform convergence \(f_N\to f\) on \(I\), we conclude that \(f\in C^1(I)\) and \(\delta f=g\).
\end{proof}

Note that adding a constant to \(\Phi\) does not affect the linear response.
Hence, we may freely subtract \(\Phi_T^{avg}:=\E{\Phi(X_T)}\), and the following formula remains valid.
This form is preferable for numerical purposes, since after centralization, that is, after subtracting \(\Phi_T^{avg}\), the estimator (the integrand in the expectation) is smaller.

\begin{corollary}[centralized path-kernel formula]\label{t:centralized}
\[
\delta \E{\Phi(X^\gamma_T)}
= 
\E{ \nabla \Phi(X_T) \cdot v_T
+(\Phi(X_T)-\Phi^{avg}_T) \int_{t=0}^{T} \frac{\alpha_t v_t }{\sigma(X_t)}
\cdot dB.
}
\]
\end{corollary}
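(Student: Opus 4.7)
My plan is to derive the corollary directly from the main theorem by exploiting the elementary fact that the linear response $\delta \E{\Phi(X^\gamma_T)}$ is invariant under shifts of $\Phi$ by any $\gamma$-independent constant. So I would pick any constant $c$ (independent of $\gamma$), apply the theorem to the observable $\tilde\Phi := \Phi - c$, note that $d\tilde\Phi = d\Phi$ and $\delta\E{\tilde\Phi(X^\gamma_T)} = \delta\E{\Phi(X^\gamma_T)}$, and read off
\[
\delta \E{\Phi(X^\gamma_T)}
=
\E{ d\Phi(X_T) v_T
+(\Phi(X_T)-c) \int_{t=0}^{T} \frac{\alpha_t v_t }{\sigma(x_t)}\cdot dB_t}.
\]
The corollary then follows by setting $c = \Phi^{avg}_T := \E{\Phi(X_T)}$ at $\gamma = 0$, which is indeed a deterministic constant and therefore commutes with $\gamma$-differentiation.

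For this argument to be more than a cosmetic relabeling, I would want to confirm that the stochastic integral $I := \int_0^T (\alpha_t v_t/\sigma(x_t))\cdot dB_t$ has zero expectation; otherwise subtracting $c$ would generate a spurious term $-c\,\E{I}$ on the right-hand side. The cleanest way to see $\E{I}=0$ is to apply the main theorem to the constant observable $\Phi \equiv 1$: its linear response is manifestly zero, and the $d\Phi$-term drops out, leaving $0 = \E{I}$. Equivalently, $I$ is an Ito integral against $B$ of an $\cF_t$-adapted integrand, and the integrability assumption on $\alpha_t$ (together with standard moment bounds on the linear variational process $v_t$) places the integrand in $\mathcal{L}^2_T$, so $I$ is a genuine martingale.

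The main obstacle, if any, is precisely this integrability check in the continuous-time limit; however, since the paper has already admitted that the passage from the rigorous discrete-time derivation in \Cref{s:diff} to the continuous-time statement in \Cref{s:cts} is formal, I would simply inherit the same hypotheses used for the theorem and impose no additional analytic machinery. The one point worth emphasizing in the writeup is that the motivation for centralizing is numerical rather than analytic: subtracting $\Phi^{avg}_T$ reduces the magnitude of the integrand appearing in the Monte-Carlo average and hence lowers the sampling variance, while leaving the expectation intact by the martingale property just noted.
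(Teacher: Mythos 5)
Your proposal is correct and essentially identical to the paper's own justification: the paper applies the main theorem to the constant-shifted observable, noting that adding a $\gamma$-independent constant to $\Phi$ changes neither the linear response nor $d\Phi$, and it records the same fact $\E{\int_{t=0}^{T}\frac{\alpha_t v_t}{\sigma(x_t)}\cdot dB_t}=0$ shortly afterwards via the Ito property. Your extra zero-mean check is not actually needed for the corollary itself (it follows automatically by comparing the theorem applied to $\Phi$ and to $\Phi-c$), so your writeup is, if anything, slightly more thorough than the paper's one-line argument.
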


\subsection{Formal infinite-time limit}
\label{s:infinite}

We also formally provide a single-path formula for stationary measures, or physical measures.
The statements in this subsection are heuristic, and a rigorous proof is left to later papers.

When we run the SDE for an infinitely long time, if the distribution does not escape to infinity, then the law of \(X^\gamma_t\) typically converges weakly to a stationary measure.
If we also choose \(\alpha_t\) so that \((\alpha_t,X^\gamma_t)\) converges jointly to a stationary process, and so that \(\alpha_t\) is large enough to temper the exponential growth of \(v^\gamma_t\) (see \Cref{s:howtouse}), then \((\alpha_t,X^\gamma_t,v^\gamma_t)\) should also converge to a stationary process.
We assume that this joint stationary distribution is unique, and we denote it by \(\mu^\gamma\); the corresponding expectation is denoted by \(\E[\mu^\gamma]{\,\cdot\,}\).

As for the linear response, we may formally substitute the finite-time result into
\[
\delta \E[\mu^\gamma]{\Phi(X_0^\gamma)}
= \lim_{T\rightarrow\infty} \delta \E[\mu^\gamma]{\Phi(X^\gamma_T)},
\]
to obtain
\[
\delta \E[\mu^\gamma]{\Phi(X^\gamma_0)}
=
\lim_{T\rightarrow\infty}
\E[\mu]{ \nabla\Phi(X_T) \cdot v_T }
+
\E[\mu]{
\int_{t=0}^{T} 
  (\Phi(X_T)-\Phi^{avg}) \,
\frac{\alpha_t v_t }{\sigma(X_t)} \cdot dB_t
},
\]
where \(\Phi^{avg}:=\E[\mu]{\Phi(X_0)}\).
Since \(\delta(\cdot)\) denotes differentiation at \(\gamma=0\), all terms on the right-hand side are evaluated under the unperturbed stationary law \(\mu:=\mu^{\gamma=0}\).

We further assume decay of correlations, that is, for smooth observables \(f\) and \(g\),
\[
\E[\mu]{ f(X_s, \alpha_s, v_s) g(X_t, \alpha_t, v_t) }
\rightarrow
\E[\mu]{ f(X_0, \alpha_0, v_0) } \E[\mu]{ g(X_0, \alpha_0, v_0)}
\]
rapidly as \(|t-s|\rightarrow\infty\).
Hence, for sufficiently large \(W>0\),
\begin{equation*}\begin{split}
\lim_{T\rightarrow\infty}
\E[\mu]{
\int_{t=0}^{T-W}
  (\Phi(X_T)-\Phi^{avg}) \,
\frac{\alpha_t v_t }{\sigma(X_t)} \cdot dB_t}
\\ \approx
\lim_{T\rightarrow\infty}
\E[\mu]{ (\Phi(X_0)-\Phi^{avg}) }
\,
\E[\mu]{ 
\int_{t=0}^{T-W}
\frac{\alpha_t v_t }{\sigma(X_t)} \cdot dB_t}
\approx 0,
\end{split}\end{equation*}
since both factors in the product vanish: the first is zero by centering, and the second is zero by the basic property of It\^o integration.

In summary,
\[
\delta \E[\mu^\gamma]{\Phi(X_0^\gamma)}
=
\lim_{W\rightarrow\infty}
\E[\mu]{ \nabla \Phi(X_{0}) \cdot v_{0} }
+
\E[\mu]{
\int_{\tau=0}^{W}
  (\Phi(X_W)-\Phi^{avg}) \,
\frac{\alpha_\tau v_\tau }{\sigma(X_\tau)} \cdot dB_\tau}.
\]
Using the ergodic theorem and the convergence to the stationary distribution, the following formula computes the above quantity by averaging along a single orbit.

\begin{formula}[ergodic path-kernel formula]
\label{t:ergodic}
\[
\delta \E[\mu^\gamma]{\Phi(X^\gamma_0)}
\,\overset{\mathrm{a.s.}}{=}\,
\lim_{W\rightarrow\infty} \lim_{T\rightarrow\infty} 
\frac 1T \int_{t=0}^T \left[
\nabla\Phi(X_t) \cdot v_t
+ \left( \Phi(X_{t+W}) - \Phi^{avg} \right)
\int_{\tau=0}^W  \frac{\alpha_{t+\tau} v_{t+\tau} }{\sigma(X_{t+\tau})} \cdot dB_{t+\tau}
\right] dt.
\]
\end{formula}

For many numerical applications, the decay of correlations is fast, so it is often enough to choose a moderate \(W\) and then a large \(T\).

\section{Algorithm and numerical examples}
\label{s:numeric}

In this section, we first give guidance on the choice of \(\alpha\), then present the algorithm in procedural form, and finally demonstrate it with a numerical example.

\subsection{A rough estimation of optimal constant \texorpdfstring{$\alpha$}{alpha}}
\label{s:howtouse}

We explain how to choose the schedule \(\alpha_t\).
Note that \(v_t\) depends on \(\alpha_t\), but any choice of \(\alpha_t\) yields the same \(\delta \E{\Phi(X_T^\gamma)}\).
Different choices of \(\alpha_t\), however, can significantly affect the variance of the estimator and hence the numerical performance.

In this subsection, we consider the simplified case where (1) the system is one-dimensional, (2) all coefficients in the dynamics of \(v_t\) are constants, and (3) \(\alpha\) is a constant independent of the filtration.
We derive an approximate expression for the size of the path-kernel estimator $J$ as a function of \(\alpha\).
We further assume that \(J(\alpha)\) is considerably larger than the gradient itself (say, by at least a factor of two), so that the variance is roughly proportional to \(J(\alpha)\).
Hence, the number of samples needed is roughly \(O(J^2(\alpha))\).
This one-dimensional case also provides guidance for choosing \(\alpha\) in more general situations.

More specifically, let \((B_t)_{t\ge 0}\) be a standard one-dimensional Brownian motion, and consider the scalar It\^o SDE
\begin{equation}\label{eq:sde}
  dv_t 
  = \Big((a_1-\alpha)v_t+a_2\Big)\,dt
  + \Big(a_3 v_t+a_4\Big)\,dB_t,
  \qquad v_0\in\mathbb R,
\end{equation}
where the constant \(a_1 \in \R\) approximates \(\nabla F\),
\(a_2 \ge 0\) approximates \(\delta F^\gamma\),
\(a_3 \in \R\) approximates \(\nabla \sigma\),
and \(a_4 \ge 0\) approximates \(\delta \sigma^\gamma\).

In \Cref{t:main}, we estimate the size of \(v_t\), regardless of \(t\), by its long-time RMS size (when it exists), namely \(\sqrt{m_2^\infty}\), where
\[
  m_2(t) := \E{v_t^2},
  \qquad
  m_2^\infty := \lim_{t\to\infty} m_2(t).
\]
The size of the path-kernel estimator is then estimated by
\begin{equation}\label{eq:Jdef}
  J(\alpha)\;:=\;\big(c_1+c_2\, |\alpha| \,\sqrt{T}\big)\,\sqrt{m_2^\infty},
\end{equation}
where the constant \(c_1 \ge 0\) approximates \(\nabla \Phi\), and \(c_2\ge 0\) approximates \(\Phi/\sigma\).

We now estimate \(m_2^\infty\).
By It\^o's formula,
\[
d(v_t^2)=2v_t\,dv_t+(dv_t)^2.
\]
Since \((dv_t)^2=(a_3 v_t+a_4)^2\,dt\), we obtain
\[
d(v_t^2)
=\Big((2(a_1-\alpha)+a_3^2)v_t^2+2(a_2+a_3a_4)v_t+a_4^2\Big)\,dt
+\Big(2a_3 v_t^2+2a_4 v_t\Big)\,dB_t.
\]
Taking expectation, and writing \(m_1(t):=\mathbb E[v_t]\), we get the closed ODE system
\begin{equation}\begin{split}
  \label{e:m1m2}
  d m_1(t) / dt &= (a_1-\alpha)\,m_1(t) + a_2, \\
  d m_2(t) / dt &=(2(a_1-\alpha)+a_3^2)\,m_2(t)+2(a_2+a_3a_4)\,m_1(t)+a_4^2.
\end{split}\end{equation}
A necessary and sufficient condition for \(m_2(t)\) to converge to a finite limit as \(t\to\infty\) is
\begin{equation} \begin{split} \label{e:m2exist}
  \alpha > \alpha^* := a_1 + \frac 12 a_3^2,
\end{split} \end{equation}
which also implies that \(m_1(t)\) has a finite long-time limit.

Assume \eqref{e:m2exist}. We now derive an estimate of \(J(\alpha)\).
Due to the exponential decay in \Cref{e:m1m2}, we can obtain the long-time limit of \(m_1\) by setting the time derivative to zero, which yields
\[
m_1^\infty:=\lim_{t\to\infty} m_1(t)=\frac{a_2}{\alpha-a_1} \ge 0.
\]
Then the long-time limit of the second moment is
\[
m_2^\infty:=\lim_{t\to\infty} m_2(t)
=
\frac{2(a_2+a_3a_4)\,m_1^\infty + a_4^2}{2(\alpha-a_1)-a_3^2}
=
\frac{a_4^2+\dfrac{2a_2(a_2+a_3a_4)}{\alpha-a_1}}{2(\alpha-a_1)-a_3^2} \ge 0.
\]
Hence, we obtain the explicit relation
\[
J(\alpha)
=
\big(c_1+c_2\, |\alpha| \,\sqrt{T}\big)\,
\sqrt{
\frac{a_4^2+\dfrac{2a_2(a_2+a_3a_4)}{\alpha-a_1}}{2(\alpha-a_1)-a_3^2}
}.
\]

We now describe the asymptotic behavior of \(J(\alpha)\). Let
\[
\alpha_{\mathrm{crit}}:=a_1+\frac12 a_3^2.
\]
As \(\alpha\downarrow \alpha_{\mathrm{crit}}\), the denominator in \(m_2^\infty\) tends to \(0^+\), and hence
\[
J(\alpha)\sim (\alpha-\alpha_{\mathrm{crit}})^{-1/2}\to\infty
\qquad\text{as }\alpha\downarrow \alpha_{\mathrm{crit}}.
\]
On the other hand, as \(\alpha\to+\infty\), one has \(m_2^\infty\sim \alpha^{-1}\), and therefore
\[
J(\alpha)\sim \alpha^{1/2}\to\infty
\qquad\text{as }\alpha\to+\infty.
\]
Consequently, there exists an optimizer
\[
\alpha_{\mathrm{opt}}\in(\alpha_{\mathrm{crit}},\infty)
\quad\text{such that}\quad
J(\alpha_{\mathrm{opt}})=\min_{\alpha>\alpha_{\mathrm{crit}}}J(\alpha).
\]
A plot of \(J(\alpha)\) for one particular choice of hyper-parameters is shown in \Cref{f:J_alpha}. As the figure illustrates, \(J(\alpha)\) is relatively flat over a wide range of \(\alpha\) values. In particular, in this example it stays within a small factor of its minimum for \(\alpha\in[3,40]\), which is a very large interval compared to the size of \(\alpha_{\mathrm{crit}}\) (here \(\alpha_{\mathrm{crit}}=1.5\)).

\begin{figure}[ht]
\centering
  \includegraphics[width=0.5\textwidth]{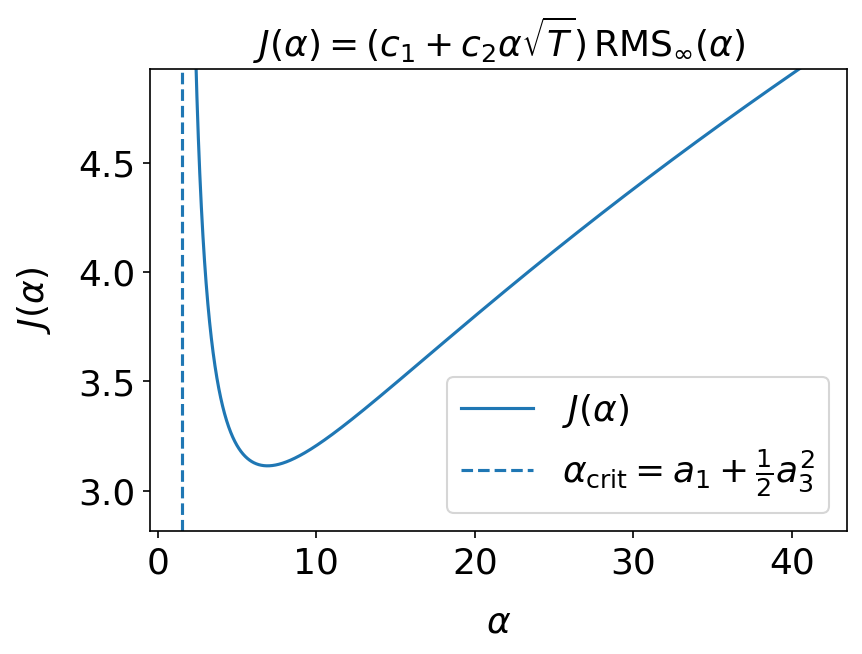}
  \caption{Plot of \(J(\alpha)\) for the one-dimensional example with \(a_1=a_2=a_3=a_4=c_1=c_2=T=1\). Hence, \(\alpha_{\mathrm{crit}}=1.5\).}
  \label{f:J_alpha}
\end{figure}

In practice, we can find an approximately optimal \(\alpha\) (in the sense of minimizing \(J\)) by trial and error.
We may try a few values of \(\alpha\), from small to large, in order to locate \(\alpha_{\mathrm{opt}}\).
This is not too difficult, since \(J(\alpha)\) has a simple shape: it diverges as \(\alpha\downarrow \alpha_{\mathrm{crit}}\), and also grows monotonically for sufficiently large \(\alpha\).
Moreover, the choice of \(\alpha\) is quite tolerant: once \(\alpha\) is moderately larger than \(\alpha_{\mathrm{opt}}\), \(J(\alpha)\) grows only slowly.
Alternatively, one may estimate \(\alpha_{\mathrm{crit}}\) empirically as the point where the sample average of \(v_t^2\) (over sample paths) changes from exponential growth to stability, and then choose \(\alpha\) to be somewhat larger than \(\alpha_{\mathrm{crit}}\), for example \(5\alpha_{\mathrm{crit}}\).

We give a remark on the relation between the above discussion and Lyapunov exponents.
Our \(\alpha_{\mathrm{crit}}\) is defined through the long-time behavior of \(\E{v_t^2}\), which is a second-moment criterion, whereas Lyapunov exponents are defined through almost sure growth rates along sample paths; these two notions of growth do not coincide.
Moreover, \(a_1\) may be viewed as the top Lyapunov exponent of the deterministic part of the original system.

If numerical efficiency is especially important, then \(\alpha_t\) should be designed more carefully.
Note that \(\alpha_t\) may be a process rather than a fixed constant.
If the system has regions of strong instability, then \(\alpha_t\) should be larger there in order to reduce the growth of \(v_t\).
If the system has regions with small noise, then \(\alpha_t\) should be smaller there, since the \(1/\sigma\) factor in the kernel part of the path-kernel estimator is already large.
In this paper, we only test the simple case where \(\alpha_t\) is a fixed constant.

\subsection{Algorithms}
\label{s:procedure}

We first give \Cref{a:finiteT} for the finite-time setting.
We use the Euler scheme to integrate the SDE, which coincides with our discrete-time formulation in \Cref{l:Dt}.

\begin{algorithm}
\caption{Path-kernel algorithm on the finite time interval \([0,T]\)}\label{a:finiteT}
\begin{algorithmic}
\Require number of sample paths \(L\), \(\Dt\), \(N=T/\Dt\), \(x_0\), \(v_0\).
\For{\(l = 1, \dots, L\)}
  \For{\(n = 0, \dots, N-1\)}
  \State Independently generate \(\Db_{n}\sim N(0,\Dt I)\)
  \State \(\alpha_n \gets \alpha_{n\Dt}\), which may depend on \(\{x_i\}_{i\le n}\)
  \State \(I_n \gets \frac{\Db_n }{\sigma(x_n)} \cdot \alpha_n v_n\)
  \State \(x_{n+1} \gets x_n + F(x_n) \Dt + \sigma(x_n)\Db_n\)
  \State \(v_{n+1} \gets v_n - \alpha_n v_n \Dt 
  + \left(\nabla_{v_n} F + \delta F^\gamma \right)(x_n) \Dt
  + \left(\nabla_{v_n} \sigma + \delta \sigma^\gamma \right) (x_n)\Db_n\)
  \EndFor
  \State \(\Phi_{l}\gets\Phi (x_{N})\)
  \State \(S^1_{l} \gets \nabla \Phi(x_N) \cdot v_N\)
  \State \(S^2_{l} \gets \sum_{n=0}^{N-1} I_n\)
  \Comment No need to store the entire path.
\EndFor
\State \(\Phi_T^{avg} \approx \frac 1L \sum_{l=1}^L \Phi_{l}\)
\State \(\delta \E{\Phi(X^\gamma_T)} \approx \frac 1L \sum_{l=1}^L \left( S^1_l +  \left( \Phi_l - \Phi_T^{avg}  \right) S^2_l \right)\)
\end{algorithmic}
\end{algorithm}

We next give \Cref{a:infinite} for the infinite-time case.
Here $\alpha_t$ needs to be stationary, \(W\) is the decorrelation time length, and \(T\) is the orbit length; typically \(W \ll T\).

\begin{algorithm}
\caption{Path-kernel algorithm for the stationary measure on an infinite time span}\label{a:infinite}
\begin{algorithmic}
\Require \(\Dt, N_W=W/\Dt, N=T/\Dt\).
\State Take any \(x_0\) and \(v_0\).
\For{\(n = 0, \cdots, N_W+N-1\)}
  \State Compute \(\Db_{n}, \alpha_n, I_n, x_{n+1}, v_{n+1}\) as in \Cref{a:finiteT}
  \State \(\Phi_{n}\gets\Phi (x_{n})\)
  \State \(S^1_{n} \gets \nabla\Phi(x_n) \cdot v_n\)
  \State If \(n\ge N_W\), \(S^2_{n} \gets \sum_{m=0}^{N_W-1} I_{n+m-N_W}\)
\EndFor
\State \(\Phi^{avg} \approx \frac 1N \sum_{n=N_W}^{N_W+N-1} \Phi_n\)
\State \(\delta \Phi^{avg} 
\approx  \frac 1N \sum_{n=N_W}^{N_W+N-1} \left( S^1_{n} 
+ \left(\Phi_{n} - \Phi^{avg} \right) S^2_{n} \right)\)
\end{algorithmic}
\end{algorithm}

From a practical point of view, the current version of the path-kernel method is \textit{not} an adjoint algorithm.
Therefore, its cost is proportional to the dimension of \(\gamma\).
The adjoint path-kernel formula, together with its application to data assimilation where one needs to optimize thousands of parameters, is given in \cite{apk}.

\subsection{Numerical example: Lorenz--96 with noise}
\label{s:example}

We demonstrate our algorithm on the Lorenz--96 model \cite{Lorenz96} with noise in \(\R^M\), where the dimension is \(M=40\).
The SDE is
\begin{eqnarray*}
d X^i
= \left( \left(X^{i+1}-X^{i-2}\right) X^{i-1} - X^i + 8 + \gamma^0 - 0.01 (X^i)^2 \right) dt + (1+\gamma^1)\sigma_0 dB^i
\\
\textnormal{for } i=1, \ldots, M;
\quad
X^\gamma_0 = \gamma^2 [1,\ldots, 1].
\end{eqnarray*}
Here we use the periodic convention \(X^{-1}=X^{M-1}\), \(X^0=X^M\), and \(X^{M+1}=X^1\), where \(X^i\) denotes the \(i\)-th component of the state.
The three parameters \(\gamma^0\), \(\gamma^1\), and \(\gamma^2\) control the drift term, the diffusion term, and the initial condition, respectively.
We add noise and the term \(-0.01(X^i)^2\) to prevent the dynamics from escaping to infinity.
The observable function \(\Phi\) is chosen as
\[
  \Phi(x) := \frac 1M \sum_{i=1}^M x^i.
\]
We set \(\sigma_0=0.5\), which corresponds to a relatively small noise level.

The present example is not accessible to the standard linear response methods.
The system is unstable, so the pure path-perturbation method is not effective.
The diffusion coefficient depends on the parameters, so the pure kernel-differentiation method does not apply.
The dynamics depend on the parameters, and we also consider the linear response of stationary measures, so the Bismut formula does not apply in either case.
Finally, the system is non-hyperbolic, so the fast response method does not apply.

First, we compute the parameter derivatives of \(\Phi_T^{avg}:=\E{\Phi(X_T)}\) for \(T=1\).
A typical orbit is shown in \Cref{f:orbit}.
We set \(\Dt=0.002\).
Then, following the discussion in \Cref{s:howtouse}, we choose the constant schedule process
\[
  \alpha_t \equiv 10,
\]
where this value is selected by trying a few choices and using the one that gives the smallest estimator variance.
Thus the instability is sufficiently tempered.
The default values of all \(\gamma^i\)'s are zero.
We compute the derivatives with respect to \(\gamma^0, \gamma^1\), and $\gamma^2$.
Among these, the derivative with respect to \(\gamma^1\) is the main difficulty.
The results are shown in \Cref{f:ga2,f:ga13}; the derivatives computed by the path-kernel method agree well with the observed dependence of \(\Phi_T^{avg}\) on the parameters.

\begin{figure}[ht]
\centering
  \includegraphics[width=0.4\textwidth]{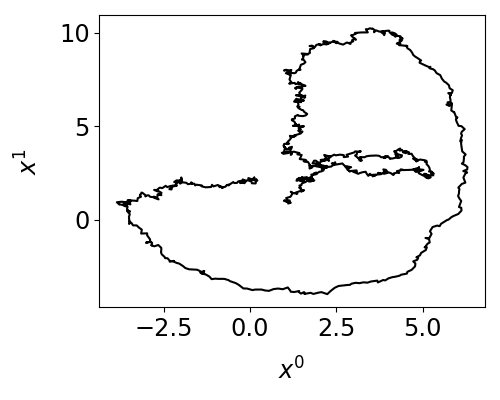}
  \caption{Plot of \(x_t^0\) and \(x_t^1\) along a typical orbit of length \(T=2\).}
  \label{f:orbit}
\end{figure}

\begin{figure}[ht]
\centering
  \includegraphics[width=0.4\textwidth]{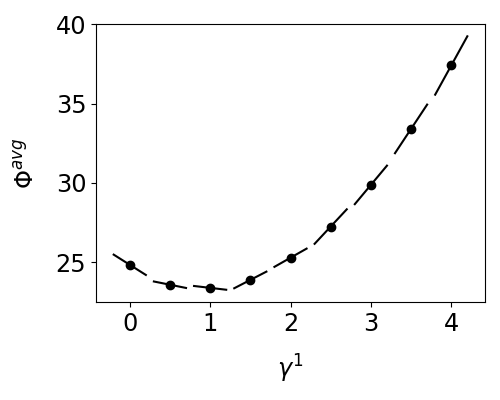}
  \caption{
  \(\Phi_T^{avg}\) and \(\delta \Phi_T^{avg}\) for different values of \(\gamma^1\).
  The dots represent \(\Phi_T^{avg}\), and the short line segments represent \(\delta \Phi_T^{avg}\) computed by the path-kernel algorithm.
  Both are computed from the same orbit.
  }
  \label{f:ga2}
\end{figure}

\begin{figure}[ht]
\centering
  \includegraphics[width=0.4\textwidth]{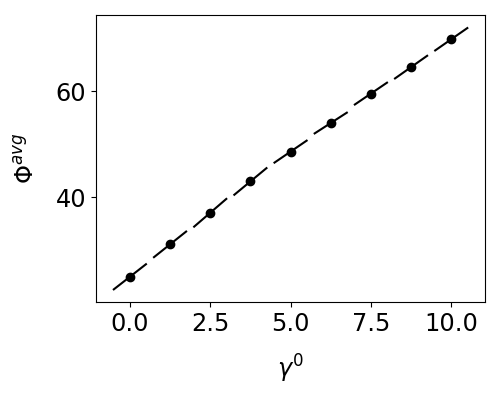}
  \includegraphics[width=0.4\textwidth]{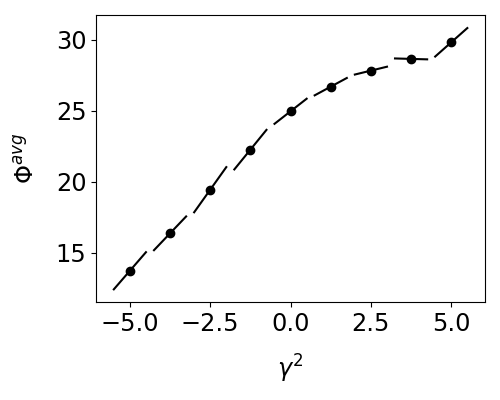}
  \caption{
  \(\Phi_T^{avg}\) and \(\delta \Phi_T^{avg}\) for different values of \(\gamma^0\) and \(\gamma^2\).
  }
  \label{f:ga13}
\end{figure}

Then we compute the linear response with respect to \(\gamma^0\) for the stationary measure.
In \Cref{a:infinite}, we set \(T=1000\) and \(W=1\).
The results are shown in \Cref{f:physical_measure}.
As the figure shows, the algorithm gives an accurate linear response in the noisy case.
Moreover, the linear response with respect to \(\gamma^0\) in the noisy case provides a reasonable reflection of the relation between the observable and the parameter in the deterministic case.

\begin{figure}[ht]
\centering
  \includegraphics[width=0.4\textwidth]{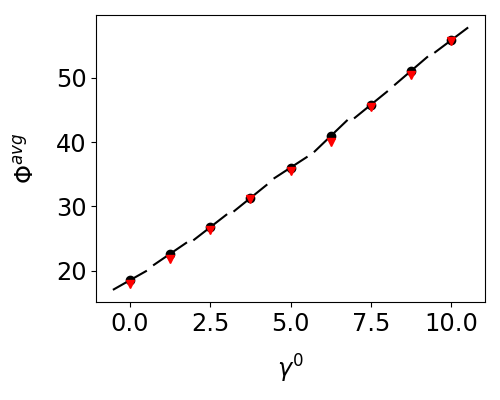}
  \caption{\(\Phi^{avg}\) and \(\delta \Phi^{avg}\) for the stationary measure for different values of \(\gamma^0\).
  The red triangles represent \(\Phi^{avg}\) for the Lorenz--96 system without noise.
  }
  \label{f:physical_measure}
\end{figure}

We also compute the linear response with respect to \(\gamma^1\) for the stationary measure, and compare it with the pure kernel-differentiation method.
By \Cref{s:deker}, the pure kernel-differentiation method is obtained by setting \(\alpha\equiv 1/\Dt\) in the path-kernel formula.
We keep all other settings the same.
\Cref{f:pm_kd_ga1} shows that the path-kernel method performs substantially better than the pure kernel method, which essentially diverges; this divergence is explained in \Cref{s:deker}.
We also tested the pure path-perturbation method, but it overflowed during the computation, so no meaningful comparison is available.

\begin{figure}[ht]
\centering
  \includegraphics[width=0.4\textwidth]{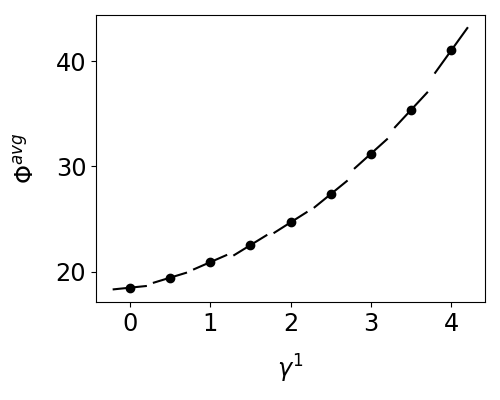}
  \includegraphics[width=0.4\textwidth]{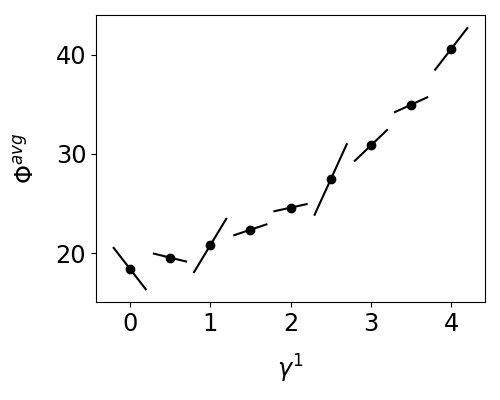}
  \caption{\(\Phi^{avg}\) and \(\delta \Phi^{avg}\) of the stationary measure for different values of \(\gamma^1\).
  Left: derivatives computed by the path-kernel method.
  Right: derivatives computed by the pure kernel-differentiation method.
  }
  \label{f:pm_kd_ga1}
\end{figure}

\section*{Acknowledgements}

The author thanks Feng-Yu Wang, Michael C. Cranston, and Xue-Mei Li for discussions.

\section*{Data availability statement}
The code used in this paper is posted at \url{https://github.com/niangxiu/paker}.
There are no other associated data.

\bibliographystyle{abbrv}
{\footnotesize\bibliography{library}}

\begin{appendix}

\section{Alternative derivation by Girsanov theorem}
\label{s:proof2}

Now that we know the path-kernel formula, we can give a simpler and more formal derivation by decomposing the pure path-perturbation \(u\) into \(v\) and an auxiliary process \(w\), and then integrating by parts in \(w\).
This derivation is somewhat retrospective: without already knowing \(v\), it may be difficult to discover this proof.
It also gives a less unified picture than our main proof, since the way we handle \(w\) and \(v\) must be interpreted from different viewpoints, whereas the main proof provides a single explicit rule for pathwise comparison.

Let \(u_t\) be the solution of the (undamped) pure path-perturbation equation
\begin{equation} \label{e:u}
du = 
\left(\nabla_u F(X) + \delta F^\gamma (X) \right) dt
+ \left(d\sigma(X)u + \delta \sigma^\gamma (X)\right)dB,
\quad
u_0 = v_0.
\end{equation}
This is the governing equation for the path perturbation of \(\{x_t\}\) caused by changing \(\gamma\), under a fixed Brownian motion \(\{B_t\}\).
The corresponding pure path-perturbation formula for the linear response is
\[
\delta \E{\Phi(X^\gamma_T)} = 
\E{ \nabla\Phi(X_T) \cdot u_T}.
\]

Define an auxiliary process \(w_t\) by
\begin{equation} \label{e:w}
dw
= \alpha_t v\,dt + \nabla_w F(X)\,dt + d\sigma(X)w\,dB,
\quad
w_0 = 0.
\end{equation}
Recall that \(v\) has the same initial condition as \(u\), and
\[
dv 
= - \alpha_t v\,dt 
+ \left(\nabla_v F(X) + \delta F^\gamma (X) \right) dt
+ \left(d\sigma(X)v + \delta \sigma^\gamma (X)\right)dB.
\]
Hence, we obtain the decomposition of \(u\), and correspondingly of the linear response,
\begin{eqnarray*}
  u = v + w,
  \quad \textnormal{and} \quad 
\delta \E{\Phi(X^\gamma_T)} = 
\E{ \nabla\Phi(X_T) \cdot v_T}
+ \E{ \nabla\Phi(X_T) \cdot w_T}.
\end{eqnarray*}

Then we use the Girsanov transform to integrate by parts in \(w\).
More specifically, consider another parameterized SDE with parameter \(\gamma'\), which perturbs only the drift:
\begin{equation*}\begin{split}
  dX_t^{\gamma'}
  = F(X_t^{\gamma'})\,dt
  + \gamma' \alpha_t v_t\,dt
  + \sigma(X_t^{\gamma'})\,dB,
  \quad \textnormal{with} \quad 
  X_0^{\gamma'} =  x_0.
\end{split}\end{equation*}
Then, under each fixed Brownian path \(\{B_t\}\), the pure path perturbation is \(\delta' X_t^{\gamma'}=w_t\), since it satisfies the same governing equation as in \Cref{e:w}.
By a direct corollary of the Girsanov transformation, namely \Cref{t:CMG} in \Cref{s:deker}, we obtain
\begin{equation*}\begin{split}
  \E{\nabla\Phi(X_T) \cdot w_T}
  =
  \delta' \E{\Phi(X_T^{\gamma'})} 
  =
  \E{\Phi(X_T) \int_{t=0}^{T} \frac{\alpha_t v_t }{\sigma(X_t)}\cdot dB_t},
\end{split}\end{equation*}
where \(\delta' := \partial/\partial \gamma'\).
Substituting this into the previous decomposition yields \Cref{t:main}.

We remark that the norms of \(w_t\) and \(u_t\) typically grow exponentially in time, whereas \(v_t\) remains bounded.
However, \(w\) is handled by the Girsanov theorem, so what matters is only the inhomogeneous term in its time differential, namely \(\alpha_t v_t\,dt\); in our setting, \(\alpha_t v_t\) is bounded, and therefore the resulting linear response estimator remains bounded.

\section{Degeneration to some well-known cases}
\label{s:degen}

\subsection{ Degeneration to the pure path-perturbation formula }
\label{s:depath}

Consider the special case where we set
\[
  \alpha_t \equiv 0.
\]
Then no path perturbation is transferred to the probability kernel.
Hence, \(v_t\) reduces to \(u_t\), and our linear response formula reduces to the pure path-perturbation formula in \Cref{e:u}.

When the path perturbation is unstable, \(u_t\) grows exponentially fast, so the estimator becomes too large. When its \(L^2\) norm is much larger than the gradient itself, this \(L^2\) norm is roughly the variance, and computing the expectation then requires too many samples.
In contrast, the schedule function \(\alpha_t\) in our path-kernel formula introduces a damping term in \(v_t\), which tempers the gradient explosion.

Intuitively, the pure path-perturbation method first fixes the Brownian motion \(\{B_t\}_{t\ge0}\), so the SDE becomes an ODE under that Brownian path, and we then consider how the trajectory \(\{X_t\}_{t\ge0}\) changes when \(\gamma\) is varied.
The resulting change in the value of \(\Phi\) at time \(T\) is then measured by \(\nabla\Phi(X_T)\cdot u_T\), and the linear response is obtained by averaging this quantity over many Brownian paths.
In contrast, as we will see later, the pure kernel-differentiation method fixes the path \(\{X_t\}_{t\ge0}\), and then asks how the Brownian motion \(\{B_t\}_{t\ge0}\) must be modified in order to produce the same path.
As a result, the probability density of obtaining that path changes, which induces a change in the averaged observable.

\subsection{Degeneration to the pure kernel-differentiation formula}
\label{s:deker}

First, this subsection reviews the pure kernel formula under fixed $\sigma$ and how it is derived from the Girsanov theorem.
Then we give a proof of this degenerate case based on modifying the proof of \Cref{l:Dt}.
We then give a consistency check showing that the pure kernel formula can also be obtained by taking a very large schedule \(\alpha_t\).
Together with \Cref{s:depath}, we see that the path-kernel formula is closer to the path method for small \(\alpha\), and closer to the kernel method for large \(\alpha\).

Consider the special case where \(\sigma(x)\) is independent of \(\gamma\), and \(v_0=0\) (so there is no perturbation in the initial condition).
That is,
\[
dX^\gamma_t
= F^\gamma(X^\gamma_t)\,dt 
+ \sigma(X^\gamma_t)\,dB,
\quad
X_0^\gamma = x_0.
\]
Let \(\Phi_T(X^\gamma):=\Phi(X^\gamma_T)\), so \(\Phi_T\) is a function on paths.
\footnote{
Note that the path-kernel result also applies to functionals on continuous paths, not just functions evaluated at a particular time, but we keep the presentation simple in this paper.
}
The Girsanov theorem gives
\[
\E{\Phi_T(X^\gamma)}
= 
\E{\Phi_T(X) 
\exp{-\int_0^T \frac{F-F^\gamma}{\sigma} (X_t)\,dB_t 
-\frac 12 \int_0^T \frac{(F-F^\gamma)^2}{\sigma^2} (X_t)\,dt 
}}
\]
Differentiating with respect to \(\gamma\), we immediately obtain
\begin{theorem}[pure kernel formula under fixed $\sigma$] \label{t:CMG}
\[
\delta \E{\Phi_T(X^\gamma)}
= 
\E{ \Phi_T(X) \int_0^T \frac{1}{\sigma}\,\delta F \cdot dB_t }.
\]
\end{theorem}

\begin{proof}[Proof of \Cref{t:CMG} by modifying our proof]
In the change of variables in \Cref{e:changeVar}, we set \(v_n \equiv 0\), so \(x^\gamma_n\equiv x_n\) for all $n$.
That is, when changing across different values of \(\gamma\), we fix the path \(\{x_n\}_{n=0}^N\).
As a result, among the three factors contributing to the linear response listed in \Cref{s:intui}, the path-perturbation factor and the volume-change factor vanish.
In \Cref{e:ephi3}, this means that the first and last terms are zero.

We only need to compute how the overall probability density changes for each path.
To do this, we first write the kernel within each small time step:
\[
p(x^\gamma_n,x^\gamma_{n+1})
= (2\pi\Delta t)^{-\frac M2}
\sigma^{-M}(x_n)
\exp{-\frac{|x_{n+1} - x_n - F^\gamma(x_n) \Delta t|^2 }{2\sigma^2(x_n)\Dt}}.
\]
Hence, the change in the kernel is
\[
\frac {\delta p(X^\gamma_n,X^\gamma_{n+1})}{p(X_n,X_{n+1})}
=
\frac{\DB_n \cdot ( \delta F^\gamma(X^\gamma_n) \Delta t)}{\sigma(X_n) \Dt}.
\]
Substituting this into \Cref{e:ephi3}, we get
\[
\delta \E{\Phi(X^\gamma_N)}
= 
\sum_{n=0}^{N-1} \E{\Phi(X_N) \frac {\delta p(X^\gamma_n,X^\gamma_{n+1})}{p(X_n,X_{n+1})} 
}
= 
\sum_{n=0}^{N-1} \E{\Phi(X_N) \frac{\DB_n \cdot \delta F^\gamma(X^\gamma_n )}{\sigma(X_n)} 
}.
\]
We may then pass to the continuous-time limit as in \Cref{s:cts}.
\end{proof}

This simple proof also explains why the pure kernel method does not work when \(\sigma^\gamma(x)\) depends on \(\gamma\).
Within each time step, the Gaussian kernel has scale \(O(\sqrt{\Delta t})\).
In the simplified case above, the perturbation moves the center of the kernel by \(\delta F^\gamma(X_n)\Dt\), which is of order \(O(\Delta t)\), and is therefore a relatively small perturbation of the kernel.
However, when \(\sigma^\gamma(x)\) depends on \(\gamma\), the perturbation also changes the scale of the kernel by \(O(\DB)\).
This change is of the same size as the kernel itself, producing an \(O(1)\) contribution to the linear response in one step; after summing over many steps, the result diverges.
The pictorial explanation in \cite{Ni_kd} may help the reader understand this more clearly.

The pure kernel method can still handle perturbations of the diffusion in discrete-time random dynamical systems, since in that setting \(\Dt\) does not go to zero.
However, even there, the path-kernel method performs better than the pure kernel method, because it reduces the size of the integrand and hence the number of required samples.
We can also extend the path-kernel method easily to discrete-time systems with non-Gaussian noise, similar to \cite{Ni_kd}.
For continuous-time SDEs, the path-kernel method is not merely beneficial; it is necessary.

Finally, we formally show that the pure kernel formula is obtained from the path-kernel formula when $\alpha_t$ takes very large values.

\begin{proof}[Formal consistency check of \Cref{t:CMG} as a special case of \Cref{l:Dt}]

In \Cref{l:Dt}, set
\[
  \alpha_n \equiv \frac 1 {\Dt}.
\]
Consequently, the governing equation for \(v_n\), namely \Cref{e:vDt}, becomes
\[
 v_{n+1} 
= \nabla_{v_n}F(x_n)\Dt + \delta F^\gamma(x_n)\Dt + d\sigma(X_n) v_n\Delta b_n,
\quad \textnormal{with} \quad 
v_0 = 0.
\]
By linear superposition, we obtain the expression
\[
 v_n = \sum_{m=0}^{n-1} 
 (\nabla F \Dt + d\sigma \Delta b)^{n-m-1}
 \delta F^\gamma(x_m)\Dt.
\]
Note that \(v_n\) is of order \(O(\Dt)\) for all \(n\), since the propagation factor \((\nabla F \Dt + d\sigma \Delta b)\) is of small order.

Since \(v_N\rightarrow 0\) as \(\Dt\rightarrow0\), we may ignore the first term in \Cref{l:Dt}, so that
\begin{eqnarray*}
\delta \E{\Phi(X^\gamma_T)}
= 
\E{ \nabla\Phi(X_N) \cdot v_N
+\Phi(X_N) \sum_{n=0}^{N-1} \frac{\DB_n }{\sigma(X_n)}
\cdot \alpha_n v_n}
\\
\approx \E{
\Phi(X_N) \sum_{n=0}^{N-1}  \frac{\DB_n }{\sigma(X_n)} \cdot \sum_{m=0}^{n-1} 
 (\nabla F \Delta t + d\sigma \DB)^{n-m-1}
 \delta F^\gamma(X_m) }.
\end{eqnarray*}
In the sum over \(m\), the term with \(m=n-1\) is of order \(O(1)\), so it must be retained.
The terms with \(m\le n-3\) are at least of order \(O(\Dt)\), so we neglect them.
Thus the only remaining case to examine is the term
\[
d\sigma(X_{n-1}) \Delta B_{n-1}\,\delta F^\gamma(X_{n-2}),
\]
which arises from the case \(m=n-2\).

We now estimate the contribution of this remaining term:
\[
T:=\E{ \Phi(X_N)  \frac{\DB_n }{\sigma(X_n)} \cdot d\sigma(X_{n-1}) \Delta B_{n-1}  \delta F^\gamma(X_{n-2})}.
\]
At first glance, this looks like a cross term with mean zero, but we must take into account the extra factor \(\Phi(X_N)\).
Set
\[
 f_n:= \frac{d\sigma(X_{n-1}) \delta F^\gamma(X_{n-2})}{\sigma(X_n)} \in \cF_n.
\]
Moreover, by Taylor expansion with respect to \(\Db_n\),
\[
 \E{\Phi(X_N)\mid\cF_{n+1}} 
 = \E{\Phi(X_N)\mid\cF_n,\DB_n=\Delta b_n} 
 = f_0 + f_0'\Delta b_n + \frac 12 f_0''\Delta b_n^2 + \cdots ,
\]
where \(f_0, f_0', f_0''\in \cF_n\).
Taking expectation in \(\Delta b_n\), we obtain
\[
\E{ \Phi(X_N) \mid\cF_{n}}
= f_0 + f_0'' O(\Delta t)
+\cdots .
\]
Subtracting the two expressions gives
\[
\E{ \Phi(X_N) \mid\cF_{n+1}}
=
\E{ \Phi(X_N) \mid\cF_{n}} + O (\DB_n) .
\]
Hence,
\begin{eqnarray*}
T
&=&\E{\E{ \Phi(X_N) f_n \DB_n \Delta B_{n-1} \mid\cF_{n+1}}}
=\E{f_n \DB_n \Delta B_{n-1}\E{ \Phi(X_N) \mid\cF_{n+1}}}
\\
&=&\E{f_n \DB_n \Delta B_{n-1} \left(\E{ \Phi(X_N) \mid\cF_{n}} + O (\DB_n)\right) }.
\end{eqnarray*}
The second term is of order \(O(\DB^3)\), so we neglect it.
Let
\[
g_n:=f_n  \Delta B_{n-1}  \E{ \Phi(X_N) \mid\cF_{n}}\in\cF_n.
\]
Then
\[
T
\approx \E{ g_n \DB_n }
= \E{\E{ g_n \DB_n \mid\cF_n}}
= \E{g_n\E{  \DB_n \mid\cF_n}}
=0 .
\]

We are therefore left with
\[
\delta \E{\Phi(X^\gamma_T)}
\approx \E{
\Phi(X_N) \sum_{n=0}^{N-1}\frac{\DB_n }{\sigma(X_n)} \cdot  \delta F^\gamma(X_{n-1}) }.
\]
Replacing \(\delta F^\gamma(X_{n-1})\) by \(\delta F^\gamma(X_n)\) introduces an error term of order \(\DB_{n-1}\), which produces a cross term \(\DB_n\DB_{n-1}\).
By the above estimate, this error vanishes as \(\Dt\rightarrow0\).
Hence,
\[
\delta \E{\Phi(X^\gamma_T)}
\approx \E{
\Phi(X_N) \sum_{n=0}^{N-1}\frac{\DB_n }{\sigma(X_n)} \cdot  \delta F^\gamma(X_{n}) }.
\]
We may then pass to the continuous-time limit.
\end{proof}

\subsection{Degeneration to pure diffusion with parameterized noise}
\label{s:degauss}

We use a simple case to illustrate why the path-perturbation idea can help the pure kernel-differentiation method for perturbations in the diffusion coefficient; this is why we combine the two.
Consider the special case where \(F(\cdot)\equiv 0\), \(x_0=0\), \(v_0=0\), and \(\sigma^\gamma(x)=1+\gamma\), which is independent of \(x\).
The SDE becomes \(dX_t^\gamma=(1+\gamma)\,dB\), so the solution is
\[
  X^\gamma_t = (1+\gamma)B_t.
\]
The time-discrete SDE becomes
\[
  X^\gamma_N = (1+\gamma)\sum_{n=0}^{N-1} \DB_n.
\]

First, consider the pure kernel method. In the discrete-time derivation, set
\[
  \alpha_n \equiv \frac 1 {\Dt}.
\]
Consequently, the governing equation for \(v_n\), namely \Cref{e:vDt}, becomes
\[
 v_{n+1} = \DB_n,\quad
 v_0 = 0.
\]
By \Cref{l:Dt}, the linear response is then given by the pure kernel-differentiation formula
\begin{equation} \label{e:Gaus}
\delta \E{\Phi(X^\gamma_N)} = \E{\nabla\Phi(X_N)\cdot \DB_{N-1}} +
\E{\Phi(X_N) \sum_{n=1}^{N-1} \frac{ \DB_n \cdot \DB_{n-1}  }{\Dt}
}.
\end{equation}
If we were to compute this numerically, we would be summing \(N\) terms of order \(O(1)\), producing a large estimator that requires many sample paths to average, and diverges as \(\Dt\to 0\).

On the other hand, if we set
\[
  \alpha_t \equiv 0,
\]
then \(v_{n+1}=v_n+\DB_n\), and hence
\[
v_N = B_N.
\]
The linear response is now
\[
\delta \E{\Phi(X^\gamma_N)}
= \E{ \nabla\Phi(X_N) \cdot B_N } .
\]
This is precisely the pure path-perturbation formula.

Intuitively, if we do not let the perturbation from the dynamics hit the kernel immediately, but instead allow it to propagate along the path for some time, then the \(\DB\)-terms coming from different time steps largely cancel each other, and the path perturbation remains bounded.
This prevents the integrand from becoming too large.
In our path-kernel method, we let most of the perturbation propagate, and only use a small portion to differentiate the kernel at each step.
As a result, the \(\DB\)-terms also largely cancel each other.

A careful reader may wonder whether the two linear response formulas obtained from the two extreme choices of schedule are actually equivalent.
They are indeed equivalent, but they behave very differently analytically and numerically.
As a consistency check, in \Cref{e:Gaus} we use the Brownian increment as the dummy variable, whose density is
\[
p(\Db) = (2\pi\Delta t)^{-\frac M2} \exp{-\frac{|\Db|^2 }{2\Dt}}.
\]
Hence,
\[
  \nabla p(\Db) = -\frac{\Db }{\Dt}\, p(\Db) .
\]
Reordering the integrations so that the integration in \(\Db_n\) comes first, the main term in \Cref{e:Gaus} becomes
\begin{eqnarray*}
&\E{\frac{\Phi(X_N)}{\Dt} \DB_n \cdot \DB_{n-1}}
\\ =&
\int_{\Db_0}\cdots\int_{\Db_{N-1}}\Phi(x_N) \frac{\Db_n \cdot \Db_{n-1}}{\Dt}  \,
p(\Db_{N-1}) d\Db_{N-1}
\cdots
p(\Db_0)d\Db_0 
\\ =&
-\int \cdots \int_{\Db_{N-1}} \left(\int_{\Db_{n}}\Phi(x_N) \nabla p( \Db_{n})\,d\Db_n \right)\cdot \Db_{n-1} \, p(\Db_{N-1})
d\Db_{N-1}\cdots .
\end{eqnarray*}
Now integrate by parts in \(\Db_n\).
Note that \(\Phi(x_N) = \Phi(\Db_0+\cdots+\Db_{N-1})\), so
\[
\frac{\partial \Phi(x_N)}{\partial \Db_n} = \nabla \Phi(x_N).
\]
Hence,
\begin{eqnarray*}
&=& \int \cdots\int_{\Db_{N-1}} 
\left(\int_{\Db_{n}}\nabla\Phi(x_N)\, p( \Db_{n})\,d\Db_n \right)
\cdot \Db_{n-1}
p(\Db_{N-1})\,d\Db_{N-1}
\cdots
\\ &=&
\E{\nabla \Phi(X_N)\cdot \DB_{n-1}}.
\end{eqnarray*}
Therefore, \Cref{e:Gaus} becomes
\[
\delta \E{\Phi(X^\gamma_N)} 
= \E{\nabla \Phi(X_N) \cdot \DB_{N-1}} 
+ \sum_{n=1}^{N-1} \E{\nabla \Phi(X_N) \cdot \DB_{n-1}}
= \E{\nabla \Phi(X_N)\cdot B_{N}},
\]
which is exactly the pure path-perturbation formula.

\subsection{Degeneration to the Bismut formula}
\label{s:debis}

We explain how our main result degenerates to the Bismut formula.
Before this paper, the relation between linear response and the Bismut formula was not identified; indeed, some authors worked on both topics without pointing out the possible connection \cite{HM06,HaMa10}.
To show this degeneration, consider the special case where \(F(x)\) and \(\sigma(x)\) do not depend on \(\gamma\), and only the initial condition \(X^\gamma_0 = x_0 +\gamma v_0\) depends on \(\gamma\).
To recover the Bismut formula, first let \(u\) be the pure path perturbation:
\[
du = \nabla_u F(X)\,dt + d\sigma(X)u\,dB,
\quad
u_0 = v_0.
\]
Let \(\beta_t = (T-t)/T\); note that \(d \beta_t = -\frac 1T dt\).

In our notation, we set \(v=\beta_t u\), so \(v\) solves
\begin{eqnarray*}
dv 
&=& u\, d \beta_t + \beta_t\, d u
= \frac {v} {\beta_t }\, d \beta_t + \nabla_{\beta_t u} F(X)\, dt + (d\sigma(X)\beta_t u)\, dB
\\
&=& - \frac {1} {T-t} v\, dt 
+ \nabla_{v} F(X)\, dt 
+ (d\sigma(X) v)\, dB ,
\end{eqnarray*}
with initial condition \(v_0\).
Thus our schedule function is
\[
\alpha_t = \frac 1{T-t}.
\]
Hence, \(\alpha_t v_t = \alpha_t \beta_t u_t = u_t/T\).
Also note that \(v_T=0\) by definition of \(v\).
With these observations, our main theorem reduces to the Bismut formula:
\[
\delta \E{\Phi(X^\gamma_T)}
= \E{\Phi(X_T) \int_{t=0}^{T} \frac{\alpha_t v_t }{\sigma(X_t)} \cdot dB}
=  \frac{1 }{T}\E{\Phi(X_T) \int_{t=0}^{T} \frac{u_t }{\sigma(X_t)} \cdot dB}
.
\]
A benefit of this special choice of schedule function \(\alpha_t\) is that we no longer need differentiability of \(\Phi\), since \(v_T=0\).

The downside of using this special schedule is that it tends to damp the instability \textit{un}evenly: it applies too little damping near the beginning of the time interval.
As a result, \(u_t\) can grow too large, and computing the expectation then requires many samples.
In contrast, we choose \(\alpha_t\) so as to reduce the growth rate of \(v_t\) more uniformly.
This contrast is even sharper when computing the linear response of stationary measures, since \(u_t\) may grow without bound over an infinite time interval in an unstable system.

In this degenerate case, the Bismut formula has been proved several times by different methods.
The original proof by Bismut \cite{Bismut84} is based on integrating by parts the derivative of the Fokker--Planck equation.
The proof by Elworthy--Li is based on multiplying by a somewhat surprising martingale.
The proof used by Hairer--Mattingly \cite{HM06} is based on decomposing the pure path perturbation, similarly to what we do in \Cref{s:proof2}.
In contrast, our derivation and proof introduce the new idea of pathwise comparison, which gives a unified picture and is better suited to further generalizations, such as incorporating the divergence method.

\end{appendix}

\end{document}